\theoremstyle{plain}
\newtheorem{theorem}{Theorem}
\newtheorem{corollary}[theorem]{Corollary}
\theoremstyle{definition}
\newtheorem{condition}{Condition}
\theoremstyle{remark}
\newtheorem{remark}{Remark}
\newcommand{\alms}{a_{\ell,m;s}}
\newcommand{\almstilde}{\tilde{a}_{\ell,m;s}}
\newcommand{\almE}{a_{\ell,m;E}}
\newcommand{\almB}{a_{\ell,m;B}}
\newcommand{\Ylms}{Y_{\ell,m;s}}
\newcommand{\ClE}{C_{\ell,E}}
\newcommand{\ClB}{C_{\ell,B}}
\newcommand{\Cls}{C_{\ell;s}}
\newcommand{\summ}{\sum_{m=-\ell}^{\ell}}
\newcommand{\sumlm}{\sum_{\ell \geq s}\sum_{m=-\ell}^{\ell}}
\newcommand{\Stwo}{\mathbb{S}^2}
\newcommand{\Ex}{\mathbb{E}}
\newcommand{\diff}{\mathrm{d}}
\newcommand{\Jac}[3]{P_{#1}^{\left(#2,#3\right)}}
\newcommand{\Amat}{\tau_{s}\left( \ell,m;u,v \right)}
\begin{document}

\title{Aliasing Effects for Samples of Spin Random Fields on the Sphere}

\author{
	Claudio Durastanti \\
	\small Department of Basic and Applied Sciences for Engineering \\
	\small Sapienza University of Rome \\
	\small \texttt{claudio.durastanti@uniroma1.it}
}

\date{}

\maketitle

\begin{abstract}
	This paper investigates aliasing effects emerging from the reconstruction from
	discrete samples of spin spherical random fields defined on the two-dimensional
	sphere. We determine the location in the frequency domain and the intensity of the
	aliases of the harmonic coefficients in the Fourier decomposition of the spin random
	field and evaluate the consequences of aliasing errors in the angular power spectrum
	when the samples of the random field are obtained by using some very popular sampling
	procedures on the sphere, namely the equiangular and the Gauss--Jacobi sampling
	schemes. Finally, we demonstrate that band-limited spin random fields are free from
	aliases, provided that a sufficiently large number of nodes is used in the selected
	quadrature rule.

\bigskip

\noindent\textbf{Keywords:}
Spin spherical random fields; harmonic analysis; aliasing effects; spin spherical harmonics;
Gauss quadrature rule; equiangular sampling.

\medskip

\noindent\textbf{MSC 2010:}
62M15, 62M40

\end{abstract}

\section{Introduction} 
\subsection{Motivations}
A spin spherical random field refers to a type of random field defined on the surface of a sphere that possesses certain rotational properties under transformations. In simpler terms, it is a mathematical construct used to model random variations or fluctuations over the surface of a sphere, where the fluctuations exhibit specific rotational symmetries (see, for example, \cite[Chapter 12]{marpec11} and the references therein). The term spin refers to the transformation properties of the field under rotations of the sphere. These fields find applications in various areas, including astrophysics, cosmology, and geophysics, where phenomena are often observed and studied on spherical surfaces. Spin random fields on the sphere have been originally introduced in \cite{np66}, to be further expanded upon by \cite{gm11,gm09} (additional insights in \cite[Chapter 12]{marpec11}), forging connections between the concept of spin-s quantities and sections of the spin-s line bundle on the sphere. Subsequently, numerous works including \cite{br14,ls12,lmrs22,mal11, mal13, ps24,stec22} delved into these geometric objects, propelled by theoretical curiosity and their relevance in statistical applications and cosmological studies \cite[Section 1.2]{marpec11}.\\
In fact, spin spherical functions have gained considerable attention in cosmology over the past three decades, in particular within the realm of Cosmic Microwave Background (CMB) polarization data analysis. This concept can be characterized as observations of random ellipses lying on the tangent planes at various points across the celestial sphere. Mathematically, this can be articulated as sections of vector bundles on the sphere and are commonly interpreted probabilistically as single realizations of random sections from Gaussian spin bundles, generalizing the examination of CMB temperature data, which are construed as realizations of Gaussian scalar-valued random fields on the sphere, see for example \cite{kks97}. \\  This poses a significant challenge in the analysis of astrophysical data, since the investigation of CMB serves as a fundamental instrument in probing Big Bang models and ascertaining key cosmological parameters. Consequently, it has captivated huge interest over the last two decades, leading to the launch of two major satellite missions, NASA’s WMAP and ESA’s Planck (for more details, the reader is referred to \cite{planck20} and the references therein).  Further investigations are currently ongoing by the satellite mission Euclid, led by the European Space Agency and designed also to investigate the nature of dark energy and dark matter. In fact, the same mathematical framework plays a pivotal role in other crucial cosmological observations, notably in weak gravitational lensing data, affecting the apparent shapes of distant galaxies due to concentrations of matter. Euclid aims to achieve this through precise measurements of the shapes and redshifts of galaxies across the sky, employing a combination of imaging and spectroscopic techniques, then described as spherical spin functions (see \cite{euclid20,euclid24}). Several statistical methodologies have already been devised in the existing literature also to address some of those cosmological applications (see for example \cite{dgm12,glm2009}). \\
A common method for mathematically representing spin random fields on the sphere involves utilizing spin-weighted representations of special group of rotations $SO(3)$, the set of orthonormal $3 \times 3$ matrices with unit determinant, which allows a spectral representation of the field. Let $x=\left(\vartheta, \varphi\right)$, $\vartheta\in\left[0,\pi\right]$, $\varphi\in\left[\left.0,2\pi\right)\right.$ be a generic location on the sphere and $\diff x = \sin \vartheta \diff \vartheta \diff \varphi$ the spherical uniform measure. The angle $\vartheta$ is usually referred to as colatitude, while $\varphi$ is labeled as longitude. A spin $s$-weighted random field $T_s$, $s \in \mathbb{N}$, can be defined via the following function on the sphere: 
\begin{equation}\label{eq:introfield}
	T_s\left(\vartheta,\varphi\right) = \sumlm \alms \Ylms\left(\vartheta,\varphi\right),
\end{equation}
where $\left\{\Ylms: \ell \geq s, m=-\ell,\ldots, \ell\right\}$ are the so-called spin weighted spherical harmonics and $\left\{\alms: \ell \geq s, m=-\ell,\ldots, \ell\right\}$ are corresponding random harmonic coefficients, given by 
\begin{equation}\label{eq:introcoeff}
	\alms=\int_{\mathbb{S}^2} T_s \left(x\right) \bar{Y}_{\ell,m;s}\left(x\right) \diff x, 
\end{equation} 
containing all the statistical information of $T_s$ (see for example \cite{marpec11, vmk} and Section \ref{sec:back} below). \\
However, calculating the integral \eqref{eq:introcoeff} explicitly is often impractical in many experimental scenarios. Typically, measurements of $T_s$ can only be gathered from a finite set of locations on the spherical surface 
\begin{equation}\notag 
	\left\{x_k=\left(\vartheta_k,\varphi_k\right):k=1,\ldots,K\right\},
\end{equation}
where $K$ is the sample size. Consequently, for any choice of $\ell$ and $m$, 
the integral yielding the harmonic coefficient $\alms$ is approximated by a weighted sum of a finite number of samples of the random field $\left\{T_s\left(x_k\right):k=1,\ldots,K\right\}$. As commonly understood in the literature, an exact reconstruction of the harmonic coefficients through finite sums is feasible when dealing with band-limited random processes. These processes are characterized by a bandwidth $L_0$ such that all harmonic coefficients for $\ell>L_0$ are zero and, then, the sum in \eqref{eq:introfield} is finite. Thus, by appropriately selecting a sampling theorem and the number of sampling points, exact reconstruction for the non-zero coefficients is achievable (see, for instance, \cite{m07}). Further discussion on this matter will be provided in Section \ref{sec:alias}.
Nevertheless, if the random field is not band-limited or if the sampling theorem is improperly chosen, approximating the integral in \eqref{eq:introcoeff} with a finite sum can result in aliasing errors. Aliasing errors occur when different coefficients become indistinguishable aliases of one another (see, for example, \cite{m07,stobur93}). The set of coefficients acting as aliases depends on the chosen sampling procedure chosen. The concept of aliasing, also known as confounding, originates from signal processing theory and related disciplines. In essence, aliasing makes different signals indistinguishable when sampled. It occurs when the reconstruction of the signal from its samples deviates from the original continuous signal (see, for example, \cite[Chapter 1]{pm96}).\\
The phenomenon of aliasing in the harmonic expansion of a 2-dimensional spherical random field has been studied by \cite{ln97}. Approximating spherical harmonic coefficients via discrete sums is of theoretical interest (see for example \cite{dmp12,dmt24}), but also relevant by the practical standpoint. Aliasing effects have indeed been examined to study stochastic temperature fields on a sphere by \cite{lns97}. This approach has then been extended to the $d$-dimensional case in \cite{dp19}. In the two-dimensional case, band-limited random fields over $\mathbb{S}^2$, which can be roughly seen as linear combinations of a finite number of standard spherical harmonics, can be uniquely reconstructed with a sufficiently large sample size. Furthermore, an explicit definition of the aliasing function, a fundamental tool for identifying the aliases of a given harmonic coefficient, is established when the sampling is based on the combination of a Gauss-Legendre quadrature formula and a trapezoidal rule, initially introduced in \cite{sku86} and further elaborated by \cite{dh2003}. For an exhaustive and more detailed review, the reader is referred to \cite{MW11,wjv07}. In the $d$-dimensional scenario, these findings have been expanded thanks to the hyperspherical harmonics representation to apply to the $\mathbb{S}^d$ case, and the aliasing function has been formulated specifically for sampling methods involving a combination of $d-1$ Gauss-Gegenbauer quadrature formulas along with a trapezoidal rule. 
In numerous practical applications, this sampling approach stands out as the most convenient method for conducting numerical analysis on the sphere (see, for instance, \cite{at12, stobur93, sze75}). Moreover, it has been demonstrated in \cite{dette05, dette19} that experimental designs utilizing this sampling methodology on $\mathbb{S}^2$ and $\mathbb{S}^d$ surfaces prove to be the most efficient among all approximate designs for regression tasks involving spherical predictors. 
This approach is currently used in the Euclid mission, to investigate the nature of dark energy and dark matter by accurately measuring the accelerated expansion of the Universe. Gauss-Legendre quadrature can be utilized in various aspects of data analysis and numerical simulations involved in the mission, including numerical integration in cosmological models (e.g., calculating integrals over redshift space to model the distribution of dark matter and dark energy), and analysis of observational data (e.g, integrating over the observed light curves or spectra from distant galaxies and quasars to infer their properties), see for example \cite{euclid20, euclid24}.
\subsection{Aim of the paper}
In this paper, our objective is to identify the locations in the frequency domain and the intensities of the aliases of the harmonic coefficients in the Fourier decomposition of the spin random field, and to evaluate the impact of aliasing errors on the angular power spectrum when samples of the random field are obtained using some of the most popular sampling procedures on the sphere: the equiangular and Gauss-Jacobi sampling schemes. By analyzing these sampling methods, we illustrate how the presence of aliases can distort the harmonic coefficients angular power spectrum, leading to inaccuracies in the representation of the spin random field. Furthermore, we demonstrate that for band-limited spin random fields, aliases are completely eliminated, provided that a sufficiently large number of nodes are used in the chosen quadrature rule. This finding underscores the importance of selecting appropriate sampling parameters to ensure accurate and alias-free reconstructions of the spin random fields, thereby preserving the accuracy of the angular power spectrum. We define an aliasing function under specific conditions related to the sampling scheme, with the goal of emulating ideal experimental settings and replicating the environment in which aliasing effects might arise. This function aims to determine the locations of aliases and quantify their contributions to the reconstruction of the theoretical harmonic coefficients. More in detail, our analysis operates under the following premise: a spin spherical random field $T_s$ is observed at a finite set of locations $\left\{x_k:k=1,\ldots,K \right\}$ named sampling points, each associated with weights $\left\{w_k: k=1,\ldots,K\right\}$. For any given set of harmonic numbers $\ell$ and $m$, the approximated—or aliased—harmonic coefficient is expressed as: 
\begin{equation*}\notag
	\tilde{a}_{\ell,m;s} = \sum_{\ell^\prime,m^\prime} \tau_s\left(\ell,m;\ell^\prime, m^\prime\right) a_{\ell^\prime, m^\prime;s},
\end{equation*}
where $ \tau_s\left(\ell,m;\ell^\prime, m^\prime\right) $ denotes the so-called aliasing function (also known as matrix) depending on the sample $\left\{x_k,w_k:k=1,\ldots,K \right\}$. A coefficient $ a_{\ell^\prime, m^\prime;s}$ is considered an alias of $\alms$ with intensity $ \left \vert \tau_s\left(\ell,m;\ell^\prime, m^\prime\right)\right \vert$ if $\tau_s\left(\ell,m;\ell^\prime, m^\prime\right) \neq 0$. \\
Initially, we examine the general structure of the aliasing function under the simple assumption that the sampling scheme is separable concerning the angular coordinates. A sampling scheme is considered separable if distinct discretization procedures are employed for each angular coordinate. We delve into the explicit form of this function and proceed to identify aliases, assuming that the sampling has been done following some among the most common spherical experimental designs. On the one hand, we use the spin spherical uniform design as the sampling method, which involves a combination of the Gauss-Jacobi quadrature formula and the trapezoidal rule. On the other, we use the equiangular approach presented in \cite{dh2003}, where both longitude and colatitude are sampled using a trapezoidal scheme. This approach has been optimized by associating spherical objects to the 2-dimensional torus via a periodic extension, see among others \cite{HW10, MW11}. For all these methods, we will utilize shared parity properties related to sampling the colatitude to demonstrate that they produce aliased harmonic coefficients, with varying intensities depending on the specific scheme. \\
Subsequently, we analyze the consequences of aliasing effects on the angular power spectrum of a spin random field, which defines the spectral decomposition of its covariance function. Finally, we explore the aliasing effects for band-limited spin random fields, establishing conditions on the sample size to mitigate the aliasing phenomenon.
\subsection{Plan of the paper}
The plan of the paper is as follows.\\ Section \ref{sec:back} collects background information and key results on spin random fields, including their expansion using spin spherical harmonics and their relationship with Jacobi polynomials. This section also provides some hints on the construction of the angular power spectrum for spin random fields. \\Section \ref{sec:unif}  outlines the sampling methods that will be examined for aliasing effects: the Gauss-Jacobi and the equiangular methods. \\ Section \ref{sec:alias} presents our main results on identifying aliases in the harmonic coefficients and angular power spectrum of spin random fields. We show that selecting appropriate sampling parameters can significantly reduce aliases and demonstrate that bandlimited random fields can be then made free from aliases. These results are achieved through the construction of a spin aliasing function, which is designed to identify and measure the significance of aliases. Our analysis assumes the separability of the sampling scheme for the angular variables, building on the methods introduced in the previous section. \\Section \ref{sec:example} provides the reader with a practical example to help the understanding of the main results. \\Section \ref{sec:proof} collects the detailed proofs of the theorems and other results discussed in the previous sections.

\section{Background}\label{sec:back}
This section provides a detailed introduction to spin spherical random fields, including their harmonic decomposition and the analysis of their covariance function via the angular power spectrum using spin spherical harmonics. Additionally, we review the relationship between spin spherical harmonics, rotation matrices on SO(3), and Jacobi polynomials. For further details, the reader is referred to \cite{at12,marpec11} and the references therein.
\subsection{Spin elements on the sphere}
This section aims to provide a review of foundational concepts regarding spin fiber bundles. Our exposition closely follows  \cite{glm2009,gm11} where readers can find further discussion and elaboration  (see also \cite[Chapter 12]{marpec11}).
In accordance with the definition of spin functions introduced by Newman and Penrose in \cite{np66}, a function $g$ has an integer-valued spin weight $s$ (or simply, $g$ is a spin $s$ quantity) if, upon a coordinate transformation where a tangent vector at a point $x \in \Stwo$ is rotated by an angle $\psi$, $g$ transforms as $g^\prime=e^{is\psi}g$. \\
In \cite{gm11}, the mathematical framework is outlined as follows. Consider the North and South poles denoted by 
$\mathcal{N}$ and $\mathcal{S}$ respectively. Let $U_I := \mathbb{S}^2/\left\{\mathcal{N},\mathcal{S}\right\}$, 
be the chart that covers the sphere excluding these two points, utilizing the usual angular coordinates $\left(\vartheta, \varphi\right)$, where $\vartheta \in \left[0,\pi\right]$ and $\varphi \in \left[\left.0,2\pi\right)\right.$. Define the rotated chart $U_{\mathcal{R}} = \mathcal{R}U_I$ , where  $\mathcal{R} \in SO(3)$, the special group of rotations, and label the corresponding coordinates as $\left(\vartheta_\mathcal{R} , \varphi_\mathcal{R} \right)$. For any $x \in \mathbb{S}^2$, 
we can establish a reference direction in the tangent plane at $x$, typically denoted $T_x \left(\mathbb{S}^2\right)$, by considering $\rho_I=\partial/\partial \varphi$, the unitary tangent vector aligned with the circle in the direction where $\vartheta$ is constant and $\varphi$ is increasing.\\
For every $x$ lying in the intersection of the charts corresponding to $U_{\mathcal{R}}$ and $U_I$, we can uniquely quantify the angle associated with a change of coordinate by determining the angle between the reference vector in the map $U_I$, and the one in the rotated chart, denoted as $\rho_{\mathcal{R}} (x) = \partial/\partial \varphi_{\mathcal{R}}$. \\
More generally, given $x \in \Stwo$and two charts $U_{\mathcal{R}_1}$ and $U_{\mathcal{R}_2}$ such that $x \in U_{\mathcal{R}_1} \cap U_{\mathcal{R}_2}$, the angle $\psi_{x,\mathcal{R}_1,\mathcal{R}_2}$ between $U_{\mathcal{R}_1}$ and $U_{\mathcal{R}_2}$,  is defined as the angle between $\rho_{\mathcal{R}_1}(x)$ and $\rho_{\mathcal{R}_2}(x)$, which is independent of any choice of coordinates (see again \cite{gm11}). Now the collection of functions $\left\{f_\mathcal{R}\right\}_{\mathcal{R}\in SO(3)}$ is a spin $s$ function $f_s$ if and only if for all $\mathcal{R}_1,\mathcal{R}_2 \in SO(3)$ and all $x \in U_{\mathcal{R}_1} \cap U_{\mathcal{R}_2}$ it holds that
\begin{equation*}
	f_{\mathcal{R}_2}=e^{is\psi_{x,\mathcal{R}_1,\mathcal{R}_2}}f_{\mathcal{R}_1}.
\end{equation*}
Observe that for $s=0$, we obtain the usual scalar functions. We say that $f_s$ is smooth, that is, $f_s \in C_{s}^{\infty}\left(\mathbb{S}^2\right)$, if for every $\mathcal{R} \in SO (3)$ the application $x \mapsto f_s \left(x\right)$ is smooth, while the functional spaces $L^p_s\left(\mathbb{S}^2\right)$ are then defined as are the sets of spin $s$ functions $f_s$ such that
\begin{equation*}
	\left\| f_s\right\|_{L^p_s\left(\mathbb{S}^2\right)}\left(\int_{\mathbb{S}^2} \left\vert f_s\left(x\right)\right \vert^p \diff x \right)^{\frac{1}{p}}<\infty.
\end{equation*}
Finally, observe that while $f_s (x)$ is a section of the fiber bundle on the sphere, its modulus is a real valued function
independent of the choice of the coordinate system such that the spaces $L^p_s\left(\mathbb{S}^2\right)$ are well defined.
\subsection{Spin spherical harmonics and the Jacobi polynomials}
Consider initially the case $s=0$ to focus on scalar elements on the sphere. It is well-established in the literature (see, for example, \cite{marpec11, at12}) that a complete orthonormal basis for the space of square-integrable functions on the sphere $L^2\left(\mathbb{S}^2\right)$ is provided by the family of spherical harmonics denoted by $\left\{Y_{\ell,m}:\ell \geq 0, m=-\ell,\ldots,\ell\right\}$. Here, $\ell$ is named principal - or multipole - number; it is sometimes also called  frequency. The number $m$ is the azimuth number. The spherical harmonics satisfy the orthogonality property
\begin{equation*}
	\int_{\Stwo} Y_{\ell,m}(x)\bar{Y}_{\ell,m}(x)\diff x = \delta_\ell^{\ell^\prime}\delta_m^{m^\prime}
\end{equation*}
Furthermore, the spherical harmonics serve as eigenfunctions of the spherical Laplacian operator $\Delta_{\mathbb{S}^2}$, 
such that, for any $x \in \Stwo$, 
\begin{equation*}
	\Delta_{\mathbb{S}^2}Y_{\ell,m}(x)=-\ell\left(\ell+1\right)Y_{\ell,m}(x).
\end{equation*} 
Expressed in angular coordinates, they can be represented as the normalized product of a complex exponential function and a so-called Legendre associated function $P_{\ell,m}:[-1,1]\mapsto \mathbb{R}$,
\begin{equation*}
	Y_{\ell,m}(\vartheta,\varphi) = \sqrt{ \frac{2\ell+1}{4\pi} \frac{\left( \ell-m\right)!}{\left( \ell+m\right)!}} P_{\ell,m}\left(\cos \vartheta\right)e^{im\varphi}.
\end{equation*}  
The Legendre associated functions are derived from the Legendre polynomials $P_{\ell}:[-1,1]\mapsto \mathbb{R}$ as follows 
\begin{equation*}
	P_{\ell,m}(t)=\left(1-t^2\right)^{\frac{m}{2}}\frac{d^m}{dt^m}P_{\ell}(t),
\end{equation*}
where $\left\{ P_\ell:\ell\geq 0  \right\}$ class of orthogonal polynomials on $[-1,1]$.
For any $f \in L^2\left(\mathbb{S}^2\right)$, the following harmonic expansion holds
\begin{equation*}
	f(x)=\sum_{\ell \geq 0} \sum_{m=-\ell}^{\ell} a_{\ell,m}Y_{\ell,m}(x), \quad a_{\ell,m}=\int_{\mathbb{S}^2}\overline{Y}_{\ell,m}(x)f(x)\diff x,
\end{equation*}
where $\left\{ a_{\ell,m}: \ell \geq 0, m=-\ell,\ldots,\ell\right\}$ are the harmonic coefficients. \\
In simpler terms, standard scalar functions on the sphere can be viewed as objects lying in the space generated by the column $s=0$ of the Wigner's $D=\left\{D^{\ell}_{m,s} \right\}_{\ell\geq 0;m,s=-\ell,\ldots,\ell}$ matrices. These matrices provide an irreducible representation for the rotation group $SO(3)$, which has been extensively discussed in works such as \cite{marpec11, vmk}. Specifically, it is well-known that the elements $\left\{D^{\ell}_{m,0} \right\}_{\ell\geq 0;m=-\ell,\ldots,\ell}$ in the central column of Wigner's $D$ matrices are directly proportional to the standard spherical harmonics $Y_{\ell,m}$. According to the Peter-Weyl Theorem, each column $s=-\ell,\ldots,\ell$ forms a space of irreducible representations, and these spaces are mutually orthogonal. Naturally, one might inquire about the significance of these additional spaces, to extend even further the analogy between the scalar and spin cases. 
These spaces are indeed closely connected to spin functions. In fact, the spin construction can alternatively be formulated using the spin-weighted representation of the special group of rotations $SO(3)$. The fiber bundle of spin $s$ functions $f_s$ can be described as the spectral expansion in terms of $\left\{D^{\ell}_{m,s} \right\}_{\ell\geq 0;m=-\ell,\ldots,\ell}$, as detailed for example in \cite{glm2009,ls12,mal11,mal13}. \\
For all integers $s$ and $\ell \geq 0$, there exists a differential operator $\eth \bar{\eth}$ such that $-\eth \bar{\eth} D^{\ell}_{m,s} = e_{\ell,s}  D^{\ell}_{m,s} $ where, for $\ell \geq s$, $e_{\ell,s}= (\ell-s)(\ell + s + 1)$ represents the associated sequence of eigenvalues. It's noteworthy that for $s=0$, this formulation reverts to the conventional expressions for the scalar case, as anticipated. (see \cite{gm11}). Let us preliminarily define the spin raising and spin lowering operators; for any spin $s$ valued function $f_s$   
\begin{align*}
	&\eth f_s(\vartheta, \varphi) = -\left(\sin \vartheta\right)^s \left[\frac{\partial}{\partial \vartheta}+i \frac{1}{\sin \vartheta}\frac{\partial}{\partial \varphi} \right] \left(\sin \vartheta\right)^{-s} f_s(\vartheta, \varphi) \\
	&\bar{\eth} f_s(\vartheta, \varphi) = -\left(\sin \vartheta\right)^{-s} \left[\frac{\partial}{\partial \vartheta}-i \frac{1}{\sin \vartheta}\frac{\partial}{\partial \varphi} \right] \left(\sin \vartheta\right)^{s} f_s(\vartheta, \varphi) 
\end{align*}
The operator $\eth$ transforms spin $s$ functions into spin $s + 1$ functions, while $\bar{\eth}$ transforms spin $s$ functions into spin $s-1$ functions
justifies their names. 
Spin spherical harmonics $\left\{\Ylms:\ell\geq s, m=-\ell,\ldots,\ell\right\}$, proportional to the elements $\left\{ D_{m,s} ^{\ell} :\ell\geq s, m=-\ell,\ldots,\ell \right\}$ can be then introduced as the eigenfunctions of  the second-order differential operator $\eth \bar{\eth}$ that extends the spherical Laplacian to spin spaces and they are related to Wigner's D matrices by the formula 
\begin{eqnarray}\notag
	\Ylms\left(\vartheta,\varphi\right)& = &\sqrt{\frac{2\ell+1}{4\pi}}\overline D_{m,-s}^{\ell} \left\{\varphi,\vartheta,0\right\}\\
	\notag
	& = &\sqrt{\frac{2\ell+1}{4\pi}}\left(-1\right)^{s} e^{im\varphi} d_{m,-s}^{\ell}\left(\vartheta\right),
\end{eqnarray}
where $d_{m,-s}^{\ell}$ is the so-called Wigner rotation (or small) matrix. Wigner $d$ and $D$ matrices are related by the following formula
\begin{equation*}
	D^{\ell}_{m,s}(\varphi,\vartheta,\delta)=e^{-im\varphi}  d_{m,s}^{\ell}\left(\vartheta\right)e^{-is\delta},
\end{equation*}
with $\delta\in\left[\left.0,2\pi\right)\right.$ denoting the third Euler angle.
Observe that the spin spherical harmonics are orthonormal, 
\begin{equation*}
	\int_{\mathbb{S}^2} Y_{\ell,m;s} (x) \overline{Y}_{\ell^\prime,m^\prime;s} (x) \diff x =\delta_{\ell}^{\ell^\prime}\delta_{m}^{m^\prime}, 
\end{equation*}
while, by means of the properties of the Wigner's matrices (see \cite{glm2009}), the following summation formula holds for any $x, x^\prime \in \Stwo$ identified as rotations belonging to $SO(3)$,
\begin{equation}\label{eq:summation}
	\sum_{m=-\ell}^{\ell} Y_{\ell,m;s} (x) \overline{Y}_{\ell^\prime,m^\prime;s} (x^\prime)=\frac{2\ell+1}{4\pi}D_{s,s}^{\ell}\left( \psi(x,x^\prime)\right),
\end{equation}
where $\psi(x,x^\prime)$ is the composition of two rotations (see \cite{vmk}), related to the Euler angles for $x$ and $x^\prime$, and where 
\begin{align}\notag
	\overline{Y}_{\ell,m;s}&=(-1)^{s+m}Y_{\ell,-m;-s}\\
	\notag &=(-1)^{m} \sqrt{\frac{2\ell+1}{4\pi}} e^{-im\varphi} d_{-m,s}^{\ell}\left(\vartheta\right)
\end{align}
If $x=x^\prime$, Equation \eqref{eq:summation} becomes
\begin{equation*}
	\sum_{m=-\ell}^{\ell} Y_{\ell,m;s} (x) \overline{Y}_{\ell^\prime,m^\prime;s} (x) =\frac{2\ell+1}{4\pi},
\end{equation*}
see for instance \cite{gn67}.
Moreover, a connection between spin spherical harmonics and the Jacobi polynomials $P_{\nu}^{\alpha,\beta}:\left[-1,1\right]\mapsto \mathbb{R}$ can be established  through the explicit formula of the Wigner rotation matrices, as outlined in \cite{marpec11,vmk},
To simplify the notation, from now on, for any set of integers $z_1,z_2,z_3 \in \mathbb{Z}$ we define 
\begin{equation*}
	h_{z_1}^{z_2}(z_3)=  \left[\frac{(z_3-z_2) ! \left( z_3+z_2\right) !} {\left(z_3 + z_1 \right)!\left(z_3 - z_1 \right)!}\right]^{\frac{1}{2}};
\end{equation*}
the Wigner rotation matrix $d_{m,-s}^{\ell}$ is given by  
\begin{equation}\label{eqn:wigner}
	d_{m,-s}^{\ell}\left(\vartheta\right)=h_{s}^{m}(\ell)\left( \sin \frac{\vartheta}{2}\right)^{m+s}\left( \cos \frac{\vartheta}{2}\right)^{m-s} \Jac{\ell-m}{m+s}{m-s}\left(\cos\vartheta\right),
\end{equation}
The Jacobi polynomials $\Jac{\nu}{\alpha}{\beta}:\left[-1,1\right]\mapsto \mathbb{R}$, also known as hypergeometric polynomials, are a class of orthogonal polynomials with respect to the weight function $\left(1-t\right)^{\alpha}\left(1+t\right)^{\beta}$, $\alpha,\beta>-1$, that is
\begin{align}\label{eqn:orthonorm}
	\int_{-1}^{1}&\left(1-t\right)^{\alpha}\left(1+t\right)^{\beta} \Jac{\nu}{\alpha}{\beta}\left(t\right) \Jac{\nu^\prime}{\alpha}{\beta}\left(t\right) = \delta_{\nu}^{\nu^{\prime}} \Lambda^{(\alpha, \beta)} _\nu,
\end{align}
with the normalization constant $\Lambda^{(\alpha, \beta)} _\nu$ given by
\begin{align*}
	\Lambda^{(\alpha, \beta)} _\nu&=\frac{2^{\alpha+\beta+1}}{2\nu+\alpha+\beta+1}\frac{\Gamma\left(\nu+\alpha+1\right)\Gamma\left(\nu+\beta+1\right)}{\nu!\Gamma\left(\nu+\alpha+\beta+1\right)},
\end{align*}
where $\Gamma(\cdot)$ is the Euler Gamma function.\\
Observe that Jacobi polynomials satisfy the following parity relation
\begin{equation}\label{eq:parity}
	P_{\nu}^{\alpha,\beta}(-t)= (-1)^{\nu}P_{\nu}^{\beta,\alpha}(t), \quad \text{for }t \in \left[-1,1\right].
\end{equation} 
This relation is mirrored by Equation \eqref{eqn:wigner} as follows:
\begin{eqnarray}
	d_{m,-s}^{\ell}\left(\pi - \vartheta\right)
	&=& \left( -1\right)^{\ell+s} d_{-m,-s}^{\ell}\left(\vartheta\right)
	,\label{eqn:wignparity}
\end{eqnarray}
see also \cite{vmk}. Furthermore, using the following property 
\begin{equation*}
	\binom{\nu}{\alpha} \Jac{\nu}{-\alpha}{\beta} (t)=\binom{\nu+\beta}{\alpha} (-1)^{\alpha} \left(\frac{1-t}{2}\right)^\alpha  \Jac{\nu-\alpha}{\alpha}{\beta}(t),
\end{equation*}
cf. \cite[Eq. 4.22.2]{sze75}, we can rewrite \eqref{eqn:wigner} as
\begin{align}
	d^{\ell}_{m,-s}\left(\vartheta\right) &= (-1)^{s+m}h_{m}^s(\ell)\left( \sin \frac{\vartheta}{2}\right)^{-m-s}\left( \cos \frac{\vartheta}{2}\right)^{m-s} \Jac{\ell+s}{-m-s}{m-s}\left(\cos\vartheta\right)\notag
\end{align}
\begin{remark}[Jacobi polynomials and other classes of orthonormal polynomials]\label{remark:poly}
	As well-known in the literature (see for example \cite{sze75}), when $\alpha=\beta$ The resulting polynomials are known as Gegenbauer polynomials, or ultraspherical polynomials. These polynomials are connected to the hyperspherical harmonics, which form an orthonormal basis for square-integrable functions defined on the $d$-dimensional sphere. Properties of Gegenbauer polynomials have been previously used in \cite{dp19} to discuss the aliasing effects for $d$ dimensional random fields. When $\alpha=\beta=0$, the resulting polynomials are the Legendre polynomials, which are instrumental in constructing standard 2-dimensional spherical harmonics. Furthermore, the Legendre associated function can be linked to the Jacobi polynomials through the formula:
	\begin{equation}\label{eq:LJ}
		P_{\ell,m}(\cos \vartheta) = \frac{\left(\ell+m\right)!}{2^{m}\ell !} \sin^m \vartheta P_{\ell-m}^{m,m}(\cos \vartheta).
	\end{equation}
	Properties of Legendre polynomials and associated functions have previously been leveraged in \cite{ln97} to explore aliasing effects in $2$-dimensional spherical random fields.
\end{remark}
For any $\ell \geq s$, $m=-\ell,\ldots,\ell$, we can define the spin $s$ harmonic coefficient as 
\begin{eqnarray}\notag
	\alms& =& \int_{0}^{2\pi}\int_{0}^{\pi} f_s(\vartheta,\varphi) \overline{\Ylms}(\vartheta,\varphi) \sin \vartheta \diff \vartheta \diff \varphi\\
	&=& (-1)^{m} \sqrt{\frac{2\ell+1}{4\pi}}  \int_{0}^{2\pi}\int_{0}^{\pi} f_s(\vartheta,\varphi) e^{-im\varphi} d_{-m,s}^{\ell}\left(\vartheta\right)\sin \vartheta \diff \vartheta \diff \varphi \notag.
\end{eqnarray}
Observe that 
\begin{equation}\label{eq:cc}
	\overline{\alms}=-(1)^{s+m}a_{\ell,-m;-s}
\end{equation}
The spin-$s$ function $f_s$ can be then represented by
\begin{align*}
	f_s\left( \vartheta,\varphi\right) & = \sumlm \alms \Ylms\left( \vartheta,\varphi\right)\\
	&=(-1)^{s}\sumlm \sqrt{\frac{2\ell+1}{4\pi}} \alms d^{\ell}_{m,-s}\left(\vartheta\right) e^{im\varphi}\\
	&=(-1)^{s}\sum_{\ell\geq s}  \sqrt{\frac{2\ell+1}{4\pi}}\sum_{m=-\ell}^{\ell} h_{s}^{m}(\ell) \alms  \\
	&\left(\frac{1-\cos\vartheta}{2}\right)^{\frac{m+s}{2}}\left(\frac{1+\cos\vartheta}{2}\right)^{\frac{m-s}{2}}\Jac{\ell-m}{m+s}{m-s}\left(\cos\vartheta\right),
\end{align*}
or, alternatively, 
\begin{align*}
	f_s\left( \vartheta,\varphi\right)&=\sum_{\ell\geq s}  \sqrt{\frac{2\ell+1}{4\pi}}\sum_{m=-\ell}^{\ell}(-1)^m h_{m}^{s}(\ell) \alms \\
	&\left(\frac{1-\cos\vartheta}{2}\right)^{\frac{-m-s}{2}}\left(\frac{1+\cos\vartheta}{2}\right)^{\frac{m-s}{2}}\Jac{\ell+s}{-m-s}{m-s}\left(\cos \vartheta\right).
\end{align*} 
Before concluding this section, we introduce another characterization of spin functions (see, for instance, \cite{cabkam05, gm11} and references therein). It can be demonstrated that there exists a scalar complex-valued function $g(\vartheta,\varphi) = \text{Re}(g)(\vartheta,\varphi)+i\text{Im}(g)(\vartheta,\varphi),$ such that 
\begin{eqnarray*}\notag
	f_s(\vartheta,\varphi)&=&f_E (\vartheta,\varphi)+i f_B (\vartheta,\varphi)\\
	&=& \sumlm \almE \Ylms(\vartheta,\varphi)+ i \sumlm \almB \Ylms(\vartheta,\varphi),
\end{eqnarray*} 
where 
\begin{equation*}
	f_E(\vartheta,\varphi) = \eth^s  \text{Re}(g)(\vartheta,\varphi); \quad f_B(\vartheta,\varphi) = \eth^s  \text{Im}(g)(\vartheta,\varphi),
\end{equation*}
and such that, for any $\ell \geq s$ and $m=-\ell,\ldots,\ell$, 
\begin{align*}
	\alms = \almE+i \almB.
\end{align*}
The labels $E$ and $B$ stem from cosmological literature where they typically denote the electric and magnetic components ($E$ and $B$ modes) of the Cosmic Microwave Background radiation (see \cite{cabkam05}). Straightforward calculations lead to
\begin{eqnarray*}\label{eq:almE}
	\almE &= &\frac{\alms+\overline{a_{\ell,-m;s}}}{2},\\
	\almB &= &\frac{\alms-\overline{a_{\ell,-m;s}}}{2}\label{eq:almB},
\end{eqnarray*}
see also \cite{glm2009,marpec11}.
\subsection{Spin spherical random fields}\label{sec:spinfields}
In this section, we deal with random isotropic spin functions $T_s$. We assume that there exist a probability space $\left\{\Omega, \mathcal{F}, P\right\}$ such that for all choices of charts $U_{\mathcal{R}}$, the ordinary random function $\left(T_s\right)_{\mathcal{R}}$, defined on $\Omega \times \Stwo$ is jointly $ \mathcal{F} \times \mathcal{B}\left( U_{\mathcal{R}} \right)$-measurable, where $\mathcal{B}\left(U_{\mathcal{R}}\right)$ denotes the Borel sigma-algebra on $U_{\mathcal{R}}$. In particular, as for the scalar case, for the spin random function $T_s(x)=\left(Q + iU\right)\left(x\right)$  the following representation holds 
\begin{equation}\label{eq:field}
	T_s (x)= \sumlm \alms \Ylms, 
\end{equation}
where, for any $\ell \geq s$, $m = -\ell, \ldots, \ell$, and $\omega \in \Omega$, $\alms=\alms(\omega)$ is a complex-valued random element, given by
\begin{eqnarray}\alms &=& \int_{\Stwo} T_s\left(x\right)\overline{\Ylms}\left(x\right)\diff x \notag\\
	&=& \int_{0}^{2\pi} \int_{0}^{\pi} T_s\left(\vartheta,\varphi\right)\overline{\Ylms}\left(\vartheta,\varphi\right)\sin \vartheta \diff \vartheta \diff \varphi \label{eq:alms},
\end{eqnarray}
such that the set $\left\{\alms=a_{\ell,m;E}+i a_{\ell,m;B}:\ell \geq s, m=-\ell, \ldots,\ell \right\}$ contains all the stochastic information pertaining to $T_s$ (see \cite{gm11}).
The equality \eqref{eq:field} holds in the $L^2$-sense, that is,
\begin{equation*}
	\lim_{L\rightarrow \infty} \Ex \left[ \int_{\Stwo} \left\vert  T_s(x) - \sum_{\ell=s}^{L}\summ \alms \Ylms\left(x\right) \right \vert^2 \diff x \right] =0,
\end{equation*}
where  the truncation bandwidth $L$ controls the field’s resolution. Also, we can define expectation $\mu_s:\Stwo\mapsto \mathbb{R}$ and covariance function $\Upsilon_s:\Stwo\times\Stwo\mapsto \mathbb{R}$ of the spin random field as 
\begin{align}\label{eq:expect}
	&\mu_s(x) = \Ex \left[T_s(x) \right],\\
	&\Gamma_s(x,x^\prime)=\Ex\left[\left( T_s \left(x\right)- \mu_s\left(x\right)\right) \left( \overline{T_s} \left(x^\prime\right)- \overline{\mu_s}\left(x^\prime\right)\right)\right]. \label{eq:covar}
\end{align}
In view of Equation \eqref{eq:expect}, $\mu_s$ is a spin $s$ function. Without losing any generality, we can make the assumption that $\mu_s(x) = 0$ for all $x \in \Stwo$. Thus, $\Ex\left[ \alms\right] =0$ for $\ell\geq s$, and $m=-\ell,\ldots,\ell$. Regarding to the covariance function, recall that a (scalar) spherical random field $T$ is isotropic if its distribution is invariant under the group of rotations, meaning $T_s(x)\overset{d}{=}T_s(gx)$ for all $x\in \Stwo$ and $g \in SO(3)$, where 
$\overset{d}{=}$ denotes equality in distribution of random fields (see for example \cite{marpec11}). The spin case is somehow different. For a given $\ell \geq s$, we define the angular power spectrum $\Cls$ as the variance of the $\ell$-th component of the spin random field $T_s$, related to the corresponding set of spherical harmonic coefficients $\left\{\alms:m=-\ell,\ldots,\ell\right\}$, i.e.,
\begin{equation}\notag
	\Cls = \frac{1}{4\pi} \sum_{m=-\ell}^{\ell} \Ex\left[\left\vert\alms\right\vert \right]
\end{equation}
Following \cite{glm2009}, we have that 
\begin{align*}
	&	\Ex\left[a_{\ell,m;E}\overline{a_{\ell^\prime,m^\prime;E}}\right] =\ClE \delta_{\ell}^{\ell^\prime}\delta_{m}^{m^\prime}\\
	&	\Ex\left[a_{\ell,m;B}\overline{a_{\ell^\prime,m^\prime;B}}\right] =\ClB \delta_{\ell}^{\ell^\prime}\delta_{m}^{m^\prime}\\
	&	\Ex\left[a_{\ell,m;E}\overline{a_{\ell^\prime,m^\prime;B}}\right] =\Ex\left[a_{\ell,m;B}\overline{a_{\ell^\prime,m^\prime;E}}\right]=0,
\end{align*}	
such that 
\begin{equation*}
	\Ex\left[\alms \overline{a_{\ell^\prime,m^\prime;s}} \right] =\Cls  \delta_{\ell}^{\ell^\prime}\delta_{m}^{m^\prime},
\end{equation*}
where
\begin{equation*}
	\Cls=\left(\ClE + \ClB\right).
\end{equation*}
Now, following \cite{br14,mal11,ps24,stec22}) and labeling by $\left(\varphi_{\psi},\vartheta_{\psi},\delta_{\psi}\right)$ the Euler angles related to $ \psi(x,x^\prime)$ and using Equation \eqref{eq:summation} in Equation \eqref{eq:covar} yields
\begin{align}\notag
	\Gamma_s(x,x^\prime)&=\Gamma_s(\psi(x,x^\prime))\\  
	&=\sum_{\ell \geq s} \frac{2\ell+1}{4\pi}\Cls D_{s,s}^{\ell}\left( \psi(x,x^\prime)\right) \notag\\
	&=\sum_{\ell \geq s} \frac{2\ell+1}{4\pi}\Cls e^{is\left(\varphi_{\psi}+\delta_{\psi}\right)}
	d^{\ell}_{s,s}(\vartheta_\psi)\notag\\
	&= \kappa (\vartheta_\psi)e^{is\left(\varphi_{\psi}+\delta_{\psi}\right)}, \notag
\end{align}
where $\kappa$ is the circular covariance function given by
\begin{align*}
	\kappa (\vartheta_\psi)&=\sum_{\ell \geq s} \frac{2\ell+1}{4\pi}\Cls 
	d^{\ell}_{s,s}(\vartheta_\psi) \\
	&= \sum_{\ell \geq s} \frac{2\ell+1}{4\pi}\Cls\left( \cos \frac{\vartheta_{\psi}}{2}  \right)^{2s} P_{\ell-s}^{0,2s}\left(\cos \vartheta_{\psi}\right)
\end{align*}
using in the last equality a proper modification of Equation \eqref{eqn:wigner}. \\
To ensure that the spin random field admits a finite circular covariance function and well-defined second-order moments, 
we impose a mild parametric constraint on the angular power spectrum $\Cls$.  
\begin{condition}\label{cond:summability}
There exist two constants $G>0$, governing the overall magnitude of the spectrum, and $\alpha \ge 2$, governing its asymptotic rate, such that
\begin{equation}\label{eq:cls}
	\Cls \le G\,(1+\ell)^{-\alpha}.
\end{equation}
\end{condition}
This assumption is classical in the theory of isotropic random fields on the sphere 
(see, for example, \cite{marpec11,glm2009}) and guarantees the finiteness of the total variance,
so that the field has finite energy and finite circular covariance.  
Since $\mathbb{E}[|a_{\ell,m;s}|^2] = \Cls$, the same bound holds for each harmonic coefficient:
\[
\mathbb{E}\!\left[|a_{\ell,m;s}|^2\right] \le G(1+\ell)^{-\alpha}.
\]
The summability implied by~\eqref{eq:cls} ensures that all series involving the squared coefficients are absolutely convergent, 
allowing the interchange of expectation and summation in later analytical developments via Fubini–Tonelli.  
For completeness, we note that condition~\eqref{eq:cls} is imposed separately on the real (electric) and imaginary (magnetic) 
components,
\[
C_{\ell}^{E} = G_E(1+\ell)^{-\alpha}, 
\qquad 
C_{\ell}^{B} = G_B(1+\ell)^{-\alpha},
\]
so that the combined spin-$s$ spectrum $\Cls = C_{\ell}^{E} + C_{\ell}^{B}$ satisfies the same decay rate with $G=G_E+G_B$.\\
We conclude this section, recalling the concept of band limited spin random fields. As in the scalar case (see for example \cite{dp19,ln97}), a random field is band-limited if there's a bandwidth $L_0 \in \mathbb{N}$, 
such that $\alms =0$ whenever $\ell \geq L_0$ for any $m=-\ell, \ldots,\ell$. In this scenario, \eqref{eq:field} becomes 
\begin{equation}\notag
	T_{L_0,s} (\vartheta,\varphi)= \sum_{\ell =s}^{L_0} \sum_{m=-\ell}^{\ell} \alms \Ylms(\vartheta,\varphi).
\end{equation}
From a practical standpoint, band-limited random fields offer a convenient approximation for fields where the harmonic coefficients decay rapidly with increasing frequency. In general, bandlimiting allows for efficient representation and compression of this signals by focusing on the most significant modes and reducing the computational and storage requirements (see for instance \cite{MW11}), Additionally, bandlimiting holds significant theoretical interest within the field of probability theory (see, for example, \cite{todino23}).

\section{Spherical sampling methods for spin objects}\label{sec:unif}
Let us consider the spin $s$ field $T_s$. Sampling $T_s$ consists in the process of converting the continuous field into a discrete set of values measured over the sampling points $\left\{x_k=\left(\vartheta_k,\varphi_k\right):k=1,\ldots,K\right\}$, to obtain the samples of the random field $\left\{T_s\left(x_k\right)=\left(\vartheta_k,\varphi_k\right):k=1,\ldots,K\right\}$. In general, this procedure involves sampling at points distributed according to a specific scheme that aligns with the mathematical structure of the chosen type of studied random field. For instance, combinations of the trapezoidal rule and Gauss-Legendre or Gauss-Gegenbauer formulas are typically used for scalar spherical random fields on $\Stwo$ or $\mathbb{S}^d$, respectively (see \cite{ln97,dp19}).\\
In this section, we first introduce a very mild condition on the sampling procedure, that is, assuming that the sampling grid is built by combining two one-dimensional quadrature rules, one for the colatitude $\vartheta$ and the other one for the longitude $\varphi$. In general, a quadrature rule serves as a technique for estimating the definite integral of a function. It achieves this by sampling the function at specific points across the integration domain and then aggregating these sampled values through weighted summation (see, for example, \cite{stobur93}).\\
We then present two of the most popular spherical sampling methods for spin objects, highlighting some similarities to establish analogous locations of the aliases. Given the choice of the sampling scheme, those aliases will have different relevance within the costruction of the aliased coefficient. These sampling schemes differ in the sampling strategy for the colatitude $\vartheta$, while the trapezoidal rule is used for the angular coordinate $\varphi$, providing a straightforward approach for integrating over this coordinate, and a specific scheme for the angle $\vartheta$. 
Indeed, the trapezoidal rule is a numerical integration method used to approximate the definite integral of a function. It operates by approximating the area under the curve of the function by dividing the interval into small segments and approximating each segment by a trapezoid (see again \cite{stobur93}).
\subsection{Separability of the sampling scheme}\label{sec:separab}
Extending the proposal introduced by \cite{ln97} for scalar functions on the sphere to the spin case, we consider a discretization scheme produced by the combination of two one-dimensional quadrature rules, with respect to the coordinates $\vartheta$ and $\varphi$ to  create a grid of points by pairing each colatitude angle with each longitude angle.
Separating the coordinates is a general successful strategy to reduce complex problems on the sphere to simpler, more manageable ones. Furthermore, this separation can simplify both the analysis and the computation of spin and scalar random fields defined on the sphere. For example, it allows the use of Fast Fourier Transform (FFT) algorithms on the sphere (see for example \cite{HW10,MW11}), enhancing computational efficiency, especially for large-scale data sets encountered in cosmology and geophysics. Furthermore, this separation aligns well with the mathematical framework of spherical harmonics. As discussed in Section \ref{sec:back}, the standard representation of spherical objects consists in their harmonic expansion, which naturally separate variables in spherical coordinates.
\begin{condition}\label{cond:sep} 
Consider the fixed natural values $N$ and $Q$ such that $K=2NQ$. Then there exist two sequences of sampling points $\left\{ \vartheta_p:p=0,\ldots,N-1 \right\}$, and $\left\{ \vartheta_q: q=0,\ldots,2Q-1 \right\}$, and the corresponding weights $\left\{ w_p^{\left(\vartheta\right)}:p=0,\ldots,N-1\right\}$ and $\left\{ w_q^{\left(\varphi\right)}:q=0,\ldots,2Q-1\right\}$ such that
	\begin{align*}	%
		\sum_{p=0}^{N-1}w_p^{\left(\vartheta\right)}=1;	\quad \sum_{q=0}^{2Q-1}w_q^{\left(\varphi\right)}=1.
	\end{align*}
	The sampling points $\left\{x_k:k=1,\ldots,K \right\}$ are defined component-wise by 
	$$\left\{ \left(\vartheta_p,\vartheta_q\right) :p=0,\ldots,N-1; q=0,\ldots,2Q-1 \right\}.$$ Likewise, each fixed value $\left(p^\ast,q^\ast\right) $ results to a given weight $k^{\ast}=p^\ast q^\ast$, $k^\ast \in \left\{1,\ldots,K \right\}$.
\end{condition}
In the remainder of this section, we will discuss two primary sampling schemes based on coordinate separation. First, we will present the Gauss-Jacobi scheme, which uses the minimum number of sampling points needed to reconstruct a polynomial function of a fixed degree. Next, we will introduce the equiangular scheme, which requires more points, anyway easier to be identified. Finally, we will characterize certain symmetry properties for both schemes, which will be crucial for establishing our main results in the following section.
\subsection{Spin spherical uniform sampling}
The first scheme is the so called spin spherical uniform sampling, defined as a proper adaptation of the scalar spherical uniform sampling method (see \cite{dh2003, dp19, ln97}). This sampling scheme involves a Gauss-Jacobi  (or, more specifically, a Gauss-Gegenbauer) quadrature formula for $\vartheta$.  The Gauss-Legendre quadrature optimally handles the weighting and integration over $\vartheta$, resulting in a uniform and precise sampling method suitable for complex spin functions on the sphere. However, the order of exact reconstruction varies with the spin value $s$.\\
A $\nu$-point Gaussian quadrature is designed to compute exactly integrals of polynomials with degrees up to $2\nu-1$, ensuring accuracy through strategic selection of points and corresponding weights $\left\{ t_k, \omega_k: k=0,\ldots,\nu-1\right\}$ (for further insights, refer to \cite{stobur93}). 
The domain of integration is conventionally taken as $\left[-1,1\right]$; the choice of quadrature points and weights depends on the weight function $W:\left[-1,1\right] \mapsto \mathbb{R}$, so that the integral to be approximated is given by $\int_{-1}^{1}P\left(t\right)W\left(t\right) \diff t$, where $P:\left[-1,1\right]$ is approximately polynomial and $W\in L^1\left[-1,1\right]$ is a well-known function. A quadrature rule of order $2\nu-1$ uses a suitable choice of the set $\left\{ t_k, \omega_k: k=0,\ldots,\nu-1\right\}$ to obtain
\begin{equation*}
	\int_{-1}^{1} P\left(t\right)W\left(t\right) \diff t = \sum_{k=0}^{\nu-1} \omega_k P\left(t_k\right),
\end{equation*}
if $P$ is of degree smaller or equal to $2\nu-1$.\\
Here we consider $W\left(t\right)=\left(1-t\right)^{\alpha}\left(1+t\right)^{\beta}$, $\alpha,\beta>-1$. In this case, where $t_k$ are the nodes and $\omega_k$
are the weights associated with the Gauss-Jacobi quadrature rule. These nodes are determined as the roots of the Jacobi polynomials of degree $\nu$ and weights are given by
\begin{align*}
	\omega_k & = \frac{G_{\nu}^{\alpha,\beta}}{J_{\nu-1}^{\alpha,\beta}(t_k)\frac{d J_{\nu}^{\alpha,\beta}(t_k)}{dt}} \\
	& = \frac{\tilde{G}_{\nu}^{\alpha,\beta}}{\left(1 - t_k^2 \right)\left(\frac{d J_{\nu}^{\alpha,\beta}(t_k)}{dt}\right)^2}, 
\end{align*}
where 
\begin{align*}
	&G_{\nu}^{\alpha,\beta} =  \frac{2^{\alpha+\beta}\left( 2\nu + \alpha + \beta \right)\Gamma (\nu +\alpha) \Gamma(\nu+\beta)}{\nu!\Gamma(\nu+\alpha+\beta+1)}\\
	&\tilde{G}_{\nu}^{\alpha,\beta} = \frac{2^{\alpha+\beta+1}\Gamma (\nu +\alpha+1) \Gamma(\nu+\beta+1)}{\nu!\Gamma(\nu+\alpha+\beta+1)} ,
\end{align*}
see \cite{stobur93}. 
\begin{remark}[Other types of Gauss quadrature rules]\label{rem:gaussformula}
	Observe that for $\alpha=\beta$ we reduce to the Gauss-Gegenbauer quadrature rule, which is utilized to evaluate the aliasing effect for $d$-dimensional random fields as described in \cite{dp19}. On the other hand, setting $\alpha=\beta=0$ results in the Gauss-Legendre formula, which is employed to assess aliasing in the case where $d=2$ as discussed in \cite{ln97}.\\
	The Gauss-Jacobi quadrature is particularly useful in numerical integration when dealing with integrals of the form:
	\begin{equation*}
		\int_{-1}^{1} \left(1-t\right)^{\alpha}\left(1+t\right)^{\beta} g(t) \diff t,
	\end{equation*}
	where $g:[-1,1]\mapsto \mathbb{R}$. When the integrand $g$ is multiplied by a non-uniform weight function $\left(1-t\right)^{\alpha}\left(1+t\right)^{\beta}$,
	typically simpler quadrature rules like Gauss-Legendre might not perform well. Gauss-Jacobi quadrature, which is tailored for such weight functions, can handle these cases more efficiently.
\end{remark}
We now introduce the spin spherical uniform sampling scheme in detail. This method extends the scalar spherical sampling approach by incorporating spin functions, ensuring a comprehensive and uniform sampling across the sphere. The scheme is developed by combining two well-established one-dimensional quadrature rules: the trapezoidal rule for the angular coordinate $\varphi$ and the Gauss-Jacobi quadrature formula for the angular coordinate $\vartheta$. This approach ensures that the sampling points are distributed uniformly and that the integration is performed efficiently, leveraging the strengths of both the trapezoidal and Gauss-Jacobi quadrature rules.\\
In general, Gauss type sampling schemes are typically used to optimize the experimental design on the sphere (see \cite{dette05,dette19}), improving the accuracy of integration or approximation of functions over a given domain once the experimenter has determined the number of samples that will compose the grid.
\begin{condition}[\textbf{Spin spherical uniform sampling}]\label{cond:2}
	Under the assumptions of Condition \ref{cond:sep}, the sampling with respect to $\varphi$ is uniform, so that for $q=0,\ldots,2Q-1$, 
	\begin{equation} \notag 
		\begin{cases}
			\varphi_q = q \frac{2\pi}{2Q} = q \frac{\pi}{Q}\\
			w_q^{\left(\varphi\right)} = \frac{2\pi}{2Q} = \frac{\pi}{Q}.
		\end{cases}
	\end{equation}
	The sampling with respect to $\vartheta$ follows a proper adaptation of the Gauss-Jacobi quadrature rule. Indeed, for $p=0,\ldots,N-1$ 
	\begin{equation}\notag
		\begin{cases}
			\vartheta_p = \arccos t_{p}\\
			w_p^{\left(\vartheta\right)} = \frac{\omega_p}{\sin \vartheta_p},
		\end{cases}
	\end{equation}
	where, $\left\{t_{p}:p=0,\ldots, N-1\right\}$ are the zeros of the Jacobi polynomial $J_{N}^{s,s}$   
\end{condition}
	Consider the quadrature formula for $\vartheta$. Since $\alpha=\beta=s$, as noted in Remark \ref{rem:gaussformula}, this formula reduces to a Gauss-Gegenbauer quadrature rule. Additionally, according to Equation \eqref{eq:LJ}, the zeros of $\Jac{N}{s}{s}$ coincide with those of the Legendre polynomial $P_{N-s}$. This establishes a connection between the quadrature rules for scalar and spin functions, allowing us to use the same quadrature points, even though the integration is exact to a higher degree in the scalar case.
\subsection{The equiangular sampling approach}
Introduced in the literature by \cite{dh2003}, the equiangular sampling involves placing sample points at equal angular intervals in both colatitude $\vartheta$ and longitude $\varphi$ directions, combining two trapezoidal rules. This approach ensures that the points are sampled over equally spaced intervals with respect to each angular coordinate. Even if this approach is easy to implement due to the uniform spacing of sampling points,
it can lead to oversampling in some regions, especially near the poles where the density of points increases.
\begin{condition}[\textbf{Equiangular sampling}]\label{cond:3}
	Under the assumptions of Condition \ref{cond:sep}, the samplings with respect to $\varphi$ and $\vartheta$ are uniform. For $q=0,\ldots,2Q-1$, 
	\begin{equation} \notag 
		\begin{cases}
			\varphi_q = q \frac{2\pi}{2Q+1} \\
			w_q^{\left(\varphi\right)} = \frac{2\pi}{2Q+1}.
		\end{cases}
	\end{equation}
	Let $N$ be such that $N-s=2z$, $z\in \mathbb{N}$, and $N^\prime = 2(N-s)+1$. For $p=0,\ldots,2N^\prime-1$ 
	\begin{equation} \notag \label{eq:thetasampling2}
		\begin{cases}
			\vartheta_p = \frac{\pi p}{2N^\prime}\\
			w_p^{\left(\vartheta\right)} = \frac{2}{N^\prime}\sin\vartheta_p\sum_{n=0}^{N^\prime-1} \frac{\sin((2n+1)\vartheta_p)}{2n+1}.
		\end{cases}
	\end{equation}
\end{condition}
Notice that we introduced the notation $N^\prime$ to indicate that in order to have an exact quadrature formula we require a double amount of samples for the colatitude with respect to the Gauss-Jacobi method.
\begin{remark}[Alternative equiangular sampling schemes]
	An interesting extension of this approach is derived through the factorization of rotations and the periodic extension of the angle $\vartheta$,  allowing for the exploitation of the orthogonality of complex exponentials over the interval $\left[\left.0,2\pi\right)\right.$. Among several alternative constructions based on this approach, here, we describe the construction presented by \cite{MW11}, while we suggest \cite{HW10, wg01} among the alternative constructions. As shown by \cite{risbo}, a Wigner's small $d$ function can be represented by the following Fourier series decomposition,
	\begin{equation*}
		d^{\ell}_{m,-s}=i^{-s-m}\sum_{n=-\ell}^{\ell}\Delta_{n,m}^{\ell}\Delta_{n,-s}^{\ell}e^{in\vartheta},
	\end{equation*}
	where we use the same notation introduced in \cite{HW10,MW11},
	\begin{equation*}
		\Delta^\ell_{n,m}=d_{n,m}^{\ell}\left(\frac{\pi}{2}\right).
	\end{equation*}
	It follows that Equation \eqref{eq:alms} can be rewritten as
	\begin{equation*}
		\alms=(i)^{s-m} \sqrt{\frac{2\ell+1}{4\pi}}\sum_{n=-\ell}^{\ell} 	\Delta^\ell_{n,m}	\Delta^\ell_{n,-s}\int_{0}^{\pi} \int_{0}^{2\pi}   T_s\left(\vartheta,\varphi\right) e^{-im\varphi} \sin\vartheta  e^{-in\vartheta}\diff \varphi \diff \vartheta, 
	\end{equation*}
	Using the orthogonality of the complex exponential basis (in both the discrete and continuous versions) yields to sampling scheme for the colatitude $\vartheta$. 
	\begin{equation} \notag \label{eq:thetasampling3}
		\begin{cases}
			\vartheta_p = \frac{\pi(2p+1) }{2N^\prime-1}, \quad p=0,\ldots N^\prime-1 \\
			w_p^{\left(\vartheta\right)} = \omega(\vartheta_p)+(1-\delta_p)^{2N^\prime-1}\vartheta_{2L-2-p},
		\end{cases}
	\end{equation}
	where, $\omega(\vartheta_p)$ is the inverse discrete Fourier transform of the rescaled weight function
	\begin{equation*}
		W(n)=\int_{0}^{2\pi} e^{in\vartheta}\sin \vartheta \diff \vartheta = \begin{cases}
			\pm \frac{i \pi}{2} & n=\pm 1\\
			0 &  n \text{ odd, }n\neq \pm 1\\
			\frac{2}{(1-n^2)} &n  \text{ even }
		\end{cases}.
	\end{equation*}
	This approach presents two main advantages. On the one hand, the Fourier series representation
	of the Wigner $d$ function allows the spherical harmonic expansion of $T_s\left(\vartheta,\varphi\right)$, after a proper extension, to be expressed as a Fourier series expansion on the two-dimensional torus. On the other hand, all rotations can be represented using the Wigner $d$-functions evaluated only at $\pi/2$, which are straightforward to compute (see, for example, \cite{risbo}). An advantage of this approach is that it requires less than half the number of samples compared to the canonical equiangular sampling theorem (see \cite{MW11}.) However, in what follows, we will refer to the original scheme for simplicity in constructing the weight function for $\vartheta$. The same results can be extended to this more refined approach, provided the symmetry of the sampling points for the colatitude is ensured.\\
	We conclude by noting that several widely used sampling strategies in the applied literature are not based on separable tensor-product constructions. A prominent example is the HEALPix scheme \cite{gor05}, which relies on a hierarchical, iso-latitude pixelization optimized for fast spherical harmonic transforms rather than exact quadrature separability. Such approaches fall outside the framework considered in this section, but they remain central in large-scale cosmological data analysis and related applications.
\end{remark}

\subsection{The parity of quadrature points}\label{sec:parity} 
The Gauss-Jacobi sampling scheme described in this section has a very regular distribution of quadrature points. While this consideration is obvious for the longitude $\varphi$, where we entail a trapezoidal rule, as far as the colatitude $\varphi$ is concerned, all the samplings enjoy a symmetry with respect to $\pi/2$. It follows the parity relation \eqref{eq:parity}, which yields 
\begin{equation*}
	\Jac{N}{s}{s}(-t)=(-1)^{N}\Jac{N}{s}{s} (t), \quad \text{for }t\in[-1,1].
\end{equation*}
Thus also in this case, the quadrature points can be arranged symmetrically, 
$$t_p = t_{N-1-p}, \text{ for }p=0, \ldots,\left[\frac{N}{2}\right],$$ 
similar also to the scalar case. \\
As far as the equiangular scheme is concerned, this seems to be no more true. Indeed, while for $p=1, \ldots,N^\prime-1$, the cubature points keep the same symmetry described above for the Gauss-Jacobi method, for $p=0$ we obtain $\vartheta_0= 0$, which breaks this symmetry. Anyway, this problem is solved by the practical point of view by means of the weight function, always null for this point in view of his structure.  
As a consequence, for each harmonic coefficient $\alms$, both the sampling schemes will share the same locations for the aliases, though they will differ in terms of intensity.\\ 
In the scalar case, this fact has even a stronger consequence, since it imply that also for the equiangular and toroidal methods, for a fixed multipole $\ell$ (odd or even), this symmetry results in the annihilation of all even or odd aliases respectively (see \cite{dp19,ln97}). However, this property does not apply in general here but in a specific case, as discussed below in Remark \ref{rem:comp}.     
\section{Measuring aliasing effects for spin random fields}\label{sec:alias}
In this section, we first introduce the aliasing function for separable spin sampling schemes, along with basic concepts such as the locations of aliases and their distances from the aliased coefficient. We then elaborate on the locations and intensities of the aliases, assuming the sampling follows one of the schemes described in the previous section. Subsequently, we establish criteria for eliminating the deletable aliases. We then examine the aliasing structure affecting the angular power spectrum of the random field. Finally, we determine the conditions under which bandlimited random fields are free from aliases.
\subsection{Introducing the spin aliasing function}
Measuring aliasing effects for spin random fields involves evaluating how high-frequency components of the fields are misrepresented when sampled at a discrete set of points, potentially leading to inaccuracies in the decomposition and reconstruction of these fields. The phenomenon of aliasing, or confounding, occurs when high-frequency components of the field are undersampled, causing them to be indistinguishable from lower-frequency components, leading to distortions in the reconstructed field. More formally, when measuring the random field over a finite number of locations,  the harmonic coefficients  $\left\{\alms:\ell \geq s, m=-\ell,\ldots,\ell\right\}$ cannot be computed explicitly via continuous integration. Instead, the integral for computing these coefficients must be approximated by a sum over a discrete set of samples of $T_s$, which are measurements of the field at specific sampling points. As a consequence, for any $\ell \geq s$, $m=-\ell,\ldots,\ell$, we can evaluate the aliased coefficient as follows 
\begin{eqnarray}
	\almstilde & = & \sum_{k=1}^{K} w_k T_s\left(\vartheta_k,\varphi_k\right) \overline{\Ylms}\left(\vartheta_k,\varphi_k\right) \sin \vartheta_k \notag \\
	& = &  \sum_{k=1}^{K} w_k \left( \sum_{u \geq s} \sum_{v=-u}^{u} a_{u,v;s}  Y_{u,v;s}\left(\vartheta_k,\varphi_k\right) \right) \overline{\Ylms}\left(\vartheta_k,\varphi_k\right) \sin \vartheta_k \notag\\
	& = & \sum_{u \geq s}\sum_{v=-u}^{u}\Amat  a_{u,v;s}, \label{eq:alias1} 
\end{eqnarray}
where
\begin{equation}\notag
	\Amat =  \sum_{k=1}^{K} w_k    Y_{u,v;s}\left(\vartheta_k,\varphi_k\right)\overline{\Ylms}\left(\vartheta_k,\varphi_k\right) \sin \vartheta_k
\end{equation}
is the aliasing function or matrix.
Note that, in view of Equation \eqref{eq:cc} 
\begin{eqnarray}
	\overline{\almstilde} 
	& = &\sum_{u \geq s}\sum_{v=-u}^{u}  (-1)^{s+v}\tau_s\left(u,v;\ell,m\right)a_{u,-v;-s}\notag
\end{eqnarray}
\begin{remark}[Locations, intensity and distance of the aliases]\label{rem:loc}
	Recall that $a_{u,v;s}$ is an alias of $\alms$ with intensity $\left \vert \Amat\right \vert$ if $\Amat \neq 0$.\\
	Following \cite{dp19,ln97}, the location in the frequency domain of the harmonic coefficient $\alms$ is identified by the the couple $(\ell,m)$, so that the distance in the frequency domain of the alias $a_{u,v;s}$ from $\alms$ is given by the Euclidean $\ell_2$ distance of the numbers $(\ell,m)$ and $(u,v)$ in the space of square-summable sequences. 
	\begin{equation*}
		\text{dist}\left( \alms,a_{u,v;s}\right) = \sqrt{(u-\ell)^2+(v-m)^2}.
	\end{equation*}
\end{remark}
We can express the aliasing function in terms of the Wigner's $d$ matrices
\begin{align*}\label{eq:Amatrixwig}
	\Amat &=  (-1)^{s+m}\sum_{k=1}^{K} w_k  \frac{2\ell+1}{4\pi} e^{-i(m-v)\varphi_k} d^{u}_{v,-s}(\vartheta_k)d^{\ell}_{-m,s}(\vartheta_k)\sin \vartheta_k\\
	&= \frac{2\ell+1}{4\pi}  \sum_{k=1}^{K} w_k e^{-i(m-v)\varphi_k} d^{u}_{v,-s}(\vartheta_k)d^{\ell}_{m,-s}(\vartheta_k)\sin \vartheta_k,
\end{align*}
and
\begin{eqnarray*}
	\overline{\Amat} &=& 
	\tau_s\left(u,v;\ell,m\right)
\end{eqnarray*}
Let us now focus on the sets of coefficients $\left\{\almE \right\}$ and $\left\{\almB \right\}$. It follows from \eqref{eq:cc} that their aliased versions are given by
\begin{align*}
	\tilde{a}_{\ell,m;E} & =  \frac{\almstilde+ \overline{\tilde{a}_{\ell,-m;s}}}{2}\\
	& = \sum_{u \geq s}\sum_{v=-u}^{u}\frac{\left( \tau_s(\ell,m;u,v)a_{u,v;s} + \tau_s(u,v;\ell,-m)\overline {a_{u,v;s}} \right) }{2}\\
	& = \sum_{u \geq s}\sum_{v=-u}^{u}\frac{\left( \tau_s(\ell,m;u,v)a_{u,v;s} +(-1)^{s+v} \tau_s(u,v;\ell,-m){a_{u,-v;-s}} \right) }{2} ; 
\end{align*}
\begin{align*}
	\tilde{a}_{\ell,m;B} & =  \frac{\almstilde- \tilde{a}_{\ell,-m;s}}{2}\\
	& = \sum_{u \geq s}\sum_{v=-u}^{u}\frac{\left( \tau_s(\ell,m;u,v)a_{u,v;s} - \tau_s(u,v;\ell,-m)\overline {a_{u,v;s}} \right) }{2}\\
	& = \sum_{u \geq s}\sum_{v=-u}^{u}\frac{\left( \tau_s(\ell,m;u,v)a_{u,v;s} -(-1)^{s+v} \tau_s(u,v;\ell,-m){a_{u,-v;-s}} \right) }{2}.
\end{align*}

Under Condition \ref{cond:sep}, the aliasing function becomes component-wise separable, as shown in the following Theorem \ref{th:sep}, whose proof is given in Section \ref{sec:proof}.
As a result of the separability of the coordinates, the aliasing function also becomes separable. The following theorem highlights how each component of the aliasing function depends on the locations in the frequency domain of both the aliased coefficient and its corresponding aliases.
\begin{theorem}\label{th:sep}
	Let us assume that Condition \ref{cond:sep} holds. Then it follows that
	\begin{eqnarray*}
		\Amat & = & \frac{\sqrt{\left(2\ell + 1\right)\left(2 u + 1 \right)}}{4\pi}  I_{N}\left( \ell, m; u, v\right) H_Q\left(m,v\right), 
	\end{eqnarray*}
	where 
	\begin{equation}\label{eq:IP}
		I_{N}\left( \ell, m; u, v\right) =  \sum_{p=0}^{N-1} w_p^{\left(\vartheta\right)} 
		d^{\ell}_{m,-s}\left(\vartheta_p\right)d^{u}_{v,-s}\left(\vartheta_p\right) \sin \vartheta_p,\\
	\end{equation}
and
\begin{equation}
	 H_Q\left(m,v\right) = \sum_{q=0}^{2Q-1}  w_q^{\left(\varphi\right)} e^{i\left(v-m\right)\varphi_q}.\label{eq:HQ}
\end{equation}
\end{theorem}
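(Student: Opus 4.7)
The plan is to substitute the explicit spin spherical harmonic representation from Section~\ref{sec:back} into the definition of $\Amat$, and then exploit the product structure that Condition~\ref{cond:sep} imposes on both the sampling nodes and the weights to factorise the single sum over $k$ into a product of two one-dimensional sums, one in $\vartheta$ and one in $\varphi$.

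First, I would start from the definition
\begin{equation*}
\Amat = \sum_{k=1}^{K} w_k\, Y_{u,v;s}(\vartheta_k,\varphi_k)\, \overline{\Ylms}(\vartheta_k,\varphi_k)\, \sin \vartheta_k,
\end{equation*}
and plug in $\Ylms(\vartheta,\varphi) = \sqrt{(2\ell+1)/(4\pi)}\,(-1)^s e^{im\varphi} d^{\ell}_{m,-s}(\vartheta)$, together with the analogous expression for $Y_{u,v;s}$. Since the Wigner small-$d$ matrices are real valued, conjugation only flips the sign of the phase $e^{im\varphi}$ and leaves the $(-1)^s$ prefactor untouched; the two $(-1)^s$ factors coming from $Y_{u,v;s}$ and $\overline{\Ylms}$ then combine to $+1$. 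The integrand of $\Amat$ therefore splits cleanly into a scalar normalisation $\sqrt{(2\ell+1)(2u+1)}/(4\pi)$, a purely $\varphi$-dependent factor $e^{i(v-m)\varphi_k}$, and a purely $\vartheta$-dependent factor $d^{u}_{v,-s}(\vartheta_k)\, d^{\ell}_{m,-s}(\vartheta_k)\,\sin\vartheta_k$.

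Next, I would invoke Condition~\ref{cond:sep}: the grid is the Cartesian product of the one-dimensional grids in $\vartheta$ and $\varphi$, and the associated weight attached to the node $x_k$ indexed by the pair $(p,q)$ factorises as $w_k = w_p^{(\vartheta)} w_q^{(\varphi)}$. Re-indexing the sum over $k \in \{1,\ldots,K\}$ as a double sum over $p \in \{0,\ldots,N-1\}$ and $q \in \{0,\ldots,2Q-1\}$, and using that the integrand factors into a product of a $\vartheta_p$-only part and a $\varphi_q$-only part, the double sum disentangles into
\begin{equation*}
\Bigl(\sum_{p=0}^{N-1} w_p^{(\vartheta)}\, d^{u}_{v,-s}(\vartheta_p)\, d^{\ell}_{m,-s}(\vartheta_p)\, \sin\vartheta_p\Bigr)\Bigl(\sum_{q=0}^{2Q-1} w_q^{(\varphi)} e^{i(v-m)\varphi_q}\Bigr),
\end{equation*}
which are precisely $I_{N}(\ell,m;u,v)$ and $H_Q(m,v)$ as introduced in \eqref{eq:IP} and \eqref{eq:HQ}. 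Multiplying by the scalar prefactor pulled out earlier yields the claimed identity.

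No serious obstacle is expected, since the result is essentially a bookkeeping exercise. The only points that deserve a little care are verifying that the two $(-1)^s$ factors cancel rather than reinforce, tracking the sign in the exponent (conjugating $e^{im\varphi}$ produces $e^{-im\varphi}$ and after multiplication with $e^{iv\varphi}$ gives the $v-m$ exponent appearing in $H_Q$), and making explicit the separable form $w_k = w_p^{(\vartheta)} w_q^{(\varphi)}$ of the quadrature weights implicit in Condition~\ref{cond:sep}. Once these are settled, the factorisation is immediate from the product structure of both the integrand and the measure on the grid.
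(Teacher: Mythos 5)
Your proposal is correct and takes essentially the same route as the paper's proof: re-index the sum over $k$ as a double sum over $(p,q)$ using the Cartesian-product structure of the nodes and the factorisation $w_k = w_p^{(\vartheta)} w_q^{(\varphi)}$ from Condition~\ref{cond:sep}, substitute the Wigner small-$d$ representation of the spin spherical harmonics, and split the sum into the product $I_N(\ell,m;u,v)\,H_Q(m,v)$ times the normalisation $\sqrt{(2\ell+1)(2u+1)}/(4\pi)$. The sign bookkeeping you flag (the two $(-1)^s$ factors cancelling and the $e^{i(v-m)\varphi_q}$ exponent arising from conjugation) is exactly what the paper's displayed computation carries out implicitly.
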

This separable form is independent of the specific quadrature rule and holds for
	all sampling schemes considered later.  
	Accordingly, all numerical examples in Section~5 rely on this same separable
	construction; only the underlying one–dimensional rules for $\vartheta$ and
	$\varphi$ change from case to case.\\
The next section will explore in greater detail how to classify aliases within a sampling scheme and outline the strategies for removing those that are deletable.
\subsection{Aliasing for separable symmetric schemes}
Let us introduce some notation that will be useful for presenting our main result.
For any $ \ell \geq s$ and $m=-\ell,\ldots,\ell$, let us define the following sets
\begin{align}
	& R_{0,m}^{Q}\left(u\right) = \left\{r:\left\vert m + 2rQ \right\vert <u  \right\} = \left\{ -\frac{u+m}{2Q} \leq r \leq \frac{u-m}{2Q}  \right\},\label{eq:Rm}\\
	&R_{m}^{Q}\left(u\right) = \left\{r:\left\vert m + 2rQ \right\vert <u  \right\} = \left\{ -\frac{u+m}{2Q} \leq r \leq \frac{u-m}{2Q}, r \neq 0 \right\}, \notag 
\end{align}
while 
\begin{align*}
	&A_{\ell,s;N}=\left\{j\in\mathbb{Z}: -\ell \leq j \leq N-s-1  \right\},\\
	&B_{s;N}=\left\{j\in\mathbb{Z}: j > N-s-1  \right\},
\end{align*}
so that $D_\ell =[-\ell,\infty)$ corresponds to $A_{\ell,s;N} \cup B_{s;N}$. Note that for $\ell > N$, $A_{\ell,s;N}=\emptyset$ and $D_\ell= B_{s;N}$. Similar to \cite{dp19}, we will denote the aliases identified by 
$j\in B_{s;N}$ as \textit{primary aliases}, while those with $j \in A_{\ell,s;N}$ will be referred to as \textit{secondary aliases}.
For any $\ell \geq s$, $m=-\ell,\ldots,\ell$, the next results will identify and classify the location of the aliases for the harmonic coefficient $\alms$, under the assumptions of a trapezoidal rule for $\varphi$ and the parity relation among sampling points for the colatitude $\varphi$ presented in Section \ref{sec:parity}. Classifying aliases into primary and secondary categories will subsequently enable the development of strategies to remove the secondary aliases.
The proof is included in Section \ref{sec:proof}. 
\begin{theorem}\label{th:main}
	Assume that the random field $T_s$ is sampled by one of the schemes described by Conditions \ref{cond:2} or \ref{cond:3}, and for any $z_1,z_2 \in \mathbb{N}$, let 
	\begin{equation*}
		\kappa_{z_1,z_2} = \frac{\sqrt{(2z_1+1)(2z_2+1)}}{2}.
	\end{equation*} 
For any $\ell \geq s$ and $m=-\ell, \ldots, \ell$, and for any $r^\prime \in \mathbb{Z}$, it holds that 	\begin{align*}\notag
		H_Q\left(m,v\right) & = \begin{cases} 2\pi & \text{for }v=m+2r^\prime Q \\
			0 & \text{otherwise}.
		\end{cases}, \notag 
	\end{align*}
	Furthermore, for $m\neq 0$, the aliased coefficient $\almstilde$ defined by Equation \eqref{eq:alias1} is given by
	\begin{align*}
		\almstilde & = \sum_{j\in A_{\ell,s;N}} \sum_{r \in R_{m}^{Q}  \left(\ell+j\right)} \kappa_{\ell,\ell+j}  I_{N}(\ell,m,\ell+j,m+2rQ) a_{\ell+j,m+2rQ;s}\\
		& + \sum_{j\in B_{s;N}} \sum_{r \in R_{0,m}^{Q}  \left(\ell+j\right)}   \kappa_{\ell,\ell+2j}  I_{N}(\ell,m,\ell+j,m+2rQ) a_{\ell+j,m+2rQ;s},
	\end{align*}
	while for $m=0$  
	\begin{align*}
		\tilde{a}_{\ell,0;s} & = \sum_{j\in A_{\ell,s;N}} \sum_{r \in R_{m}^{Q}  \left(\ell+j\right)}   \kappa_{\ell,\ell+j} I_{N}(\ell,m,\ell+j,m+2rQ) a_{\ell+j,m+2rQ;s}\\
		& + \sum_{j\in B_{s;N}} \sum_{r \in R_{0}^{Q}  \left(\ell+j\right)}    \kappa_{\ell,\ell+j}  I_{N}(\ell,0,\ell+j,2rQ) a_{\ell+j,2rQ;s}\\
		& + \sum_{j\in B_{s;N}} \kappa_{\ell,\ell+2j} I_{N}(\ell,0,\ell+2j,0) a_{\ell+2j,0;s}.
	\end{align*}
\end{theorem}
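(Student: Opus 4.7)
The plan is to build on the separable representation of $\tau_s$ given by Theorem \ref{th:sep}, evaluating each of its two factors on the sampling schemes of Conditions \ref{cond:2} and \ref{cond:3} and using the parity symmetry of the colatitude nodes described in Section \ref{sec:parity} to classify the surviving aliases.

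First I would dispatch the longitude factor $H_Q(m,v)$. Both sampling schemes prescribe equispaced $\varphi_q$ with a common weight, so \eqref{eq:HQ} reduces to a finite geometric sum in $e^{i(v-m)\varphi_q}$. A standard computation yields that $H_Q(m,v)=2\pi$ precisely when $v-m\in 2Q\mathbb{Z}$ and vanishes otherwise, which is the first assertion of the theorem. Inserting this into \eqref{eq:alias1} via Theorem \ref{th:sep} immediately collapses the sum over $v$ to one over integers $r$ with $|m+2rQ|\leq u$, and the reindexing $u=\ell+j$ splits the admissible range of $u\geq s$ into the disjoint union $A_{\ell,s;N}\cup B_{s,N}$.

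The heart of the proof is the analysis of the colatitude factor $I_N(\ell,m;\ell+j,m+2rQ)$. Using the explicit Jacobi representation \eqref{eqn:wigner}, the product $d^{\ell}_{m,-s}(\vartheta)\,d^{\ell+j}_{m+2rQ,-s}(\vartheta)$ can, after the substitution $t=\cos\vartheta$, be recast as a polynomial of controlled degree multiplied by the Gauss-Jacobi weight $(1-t)^s(1+t)^s$; the cancellation of the extra $\sin\vartheta$ factor against the $\sin\vartheta_p$ appearing in the denominator of $w_p^{(\vartheta)}$ (Condition \ref{cond:2}) reduces $I_N$ to an $N$-point Gauss-Jacobi quadrature against this weight. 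For $j\in A_{\ell,s;N}$ the polynomial degree is at most $2N-1$, so the quadrature is exact and $I_N$ coincides with the continuous integral; orthogonality of the Wigner $d$ matrices with common second index $-s$ then forces every $r=0$ contribution to vanish, except the trivial term $(j,r)=(0,0)$ which reproduces the signal itself, leaving only aliases indexed by $r\in R_m^{Q}(\ell+j)$. For $j\in B_{s,N}$ exactness fails and both $r=0$ and $r\neq 0$ contributions survive, yielding the larger index set $R_{0,m}^{Q}(\ell+j)$. The $m=0$ case additionally exploits the parity identity \eqref{eqn:wignparity}: with $v=2rQ$ and the colatitude grid symmetric about $\pi/2$, the integrand $d^{\ell}_{0,-s}(\vartheta)\,d^{\ell+j}_{0,-s}(\vartheta)\sin\vartheta$ is odd under $\vartheta\mapsto \pi-\vartheta$ whenever $\ell$ and $\ell+j$ have opposite parity, so that the symmetric quadrature annihilates such terms; this is precisely the parity selection that produces the third summand restricted to $u=\ell+2j$.

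I expect the main technical obstacle to be the degree bookkeeping needed to pin down the precise threshold $j=N-s-1$ separating $A_{\ell,s;N}$ from $B_{s,N}$: one must track the powers of $\sin(\vartheta/2)$ and $\cos(\vartheta/2)$ coming from \eqref{eqn:wigner} together with the degrees of the two Jacobi polynomials entering the product, accounting for the shift by $s$ induced by the weight. A secondary difficulty is to verify that the equiangular scheme of Condition \ref{cond:3} produces the same alias locations, with the relevant exactness now provided by the trapezoidal rule on the periodic extension and with the asymmetric node $\vartheta_0=0$ harmlessly suppressed by its vanishing weight, as noted at the end of Section \ref{sec:parity}.
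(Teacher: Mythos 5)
Your proposal is correct and follows essentially the same route as the paper's proof: the geometric-sum evaluation of $H_Q$, the reindexing $u=\ell+j$ with the split into $A_{\ell,s;N}$ and $B_{s,N}$ according to exactness of the $N$-point quadrature, orthogonality killing the $r=0$ secondary terms, and the parity of the symmetric colatitude nodes annihilating the odd-$j$ primary aliases when $m=r=0$. The only cosmetic difference is that you invoke the orthogonality of the Wigner $d$ functions with common second index $-s$ directly, whereas the paper passes through the substitution $t=\cos\vartheta$ and the orthogonality relation \eqref{eqn:orthonorm} for Jacobi polynomials; these are equivalent.
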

For a clearer understanding of these theoretical results, Section~5
illustrates their concrete behaviour through the aliased coefficient
$\tilde a_{2,0;2}$ computed under both sampling schemes  with $N=6$. In the Case~I we will choose $Q=1$, while Case~II has $Q=2$.  The corresponding aliasing weights and numerical values of the aliasing function are reported in Tables~\ref{tab:table1} and \ref{tab:table2} respectively.
\begin{remark}[Comparison with the scalar case]\label{rem:comp}
	In Section \ref{sec:parity} we noted that the cubature points in this sampling scheme coincide with those developed for the scalar case in \cite{ln97}, even though the integration here is exact up to a lower degree of the spin spherical random fields ($N-s$ versus $N$).  There is another crucial difference between these two cases. In the scalar case, in both dimension 2 and higher dimensions $d>2$ (cf. \cite{dp19}), for a fixed $\ell \in \mathbb{N}$, combining Equations \eqref{eq:parity} and \eqref{eq:LJ} and considering the symmetry of the cubature points, it follows that the aliases of each $a_{\ell,m}$ are characterized by a multipole with the same parity as $\ell$. That is, coefficients with even (odd) $\ell$ have aliases only with even (odd) multipoles. This property does not hold for spin $s\neq 0$, except in the case where $m=0$ and $r=0$.  A similar symmetry is preserved for any $\ell\geq  $ and $m=-\ell,\ldots,\ell$, by fixing $r=-\frac{m}{Q}$ and $j=0$. Indeed, in this case Equation \eqref{eq:IP2} becomes
	\begin{align*}
		I_{N}\left( \ell, m; \ell, -m\right) &=  \sum_{p=0}^{N-1} w_p^{\left(\vartheta\right)} 
		d^{\ell}_{m,-s}\left(\vartheta_p\right)d^{\ell}_{-m,-s}\left(\vartheta_p\right) \sin \vartheta_p.
	\end{align*}
	Following Equation \eqref{eqn:wignparity} yields
	$$
	d^{\ell}_{m,-s}\left(\pi-\vartheta\right)d^{\ell}_{-m,-s}\left(\pi-\vartheta\right) \sin\left(\pi-\vartheta\right)= (-1)^{2(\ell+s)}d^{\ell}_{-m,-s}\left(\vartheta\right)d^{\ell}_{m,-s}\left(\vartheta\right) \sin\left(\vartheta\right).
	$$
	so that $ I_{N}\left( \ell, m; \ell, -m\right)$ can be evaluated using only half of the cubature points.
\end{remark}
The next result will show that a proper choice of the number of sampling points can eliminate the secondary aliases. However, it is impossible to remove also all the primary aliases solely by choosing sampling points and parameters. As discussed below, these aliases can only be all completely removed if the random field is band-limited and the sampling points are properly selected. The proof of this result is in Section \ref{sec:proof}.
\begin{corollary}\label{cor:1}
	In the same setting proposed within Theorem \ref{th:main}, assume that $Q>N-s$. Then, for $m \neq 0$, it holds that
	\begin{align*}
		\almstilde & =  \sum_{j\in B_{s;N}} \sum_{r \in R_{0,m}^{Q}  \left(\ell+j\right)}    \kappa_{\ell,\ell+j} I_{N}(\ell,m,\ell+j,m+2rQ) a_{\ell+j,m+2rQ;s},
	\end{align*}
	while for $m=0$  
	\begin{align*}
		\tilde{a}_{\ell,0;s} & =\sum_{j\in B_{s;N}} \left(\sum_{r \in R_{0}^{Q}  \left(\ell+j\right)}   \kappa_{\ell,\ell+j} I_{N}(\ell,0,\ell+j,2rQ) a_{\ell+j,2rQ;s}\right.\\
		& + \left. \kappa_{\ell,\ell+2j} I_{N}(\ell,0,\ell+2j,0) a_{\ell+2j,0;s}\right).
	\end{align*}
	Moreover, the minimum distance between the aliased coefficient and its aliases is given by
		\begin{align*}
		N\sqrt{5+\frac{s^2}{N^2}-\frac{2s}{N}}
	\end{align*}
\end{corollary}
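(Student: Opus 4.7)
The plan is to specialise the decomposition of $\almstilde$ provided by Theorem \ref{th:main} and annihilate its secondary-alias part under the hypothesis $Q>N-s$. By that theorem $\almstilde$ splits as a sum indexed by $j\in A_{\ell,s;N}$ (secondary aliases) plus a sum indexed by $j\in B_{s,N}$ (primary aliases); the corollary is proved once I establish that $I_{N}(\ell,m,\ell+j,m+2rQ)=0$ for every pair $(j,r)$ in the secondary index set. The minimum-distance claim then follows by a direct inspection of the support of the surviving primary contributions.

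The engine for the vanishing is the exactness of the Gauss--Jacobi quadrature underlying Condition \ref{cond:2}: with $N$ nodes at the zeros of $\Jac{N}{s}{s}$, the formula $\sum_p \omega_p G(t_p)$ reproduces $\int_{-1}^1 (1-t)^s(1+t)^s G(t)\,\diff t$ exactly whenever $\deg G\leq 2N-1$. Because $w_p^{(\vartheta)}=\omega_p/\sin\vartheta_p$, the $\sin\vartheta_p$ in \eqref{eq:IP} cancels and $I_N(\ell,m,u,v)$ becomes a standard Gauss--Jacobi sum of $d^\ell_{m,-s}(\vartheta_p)\,d^u_{v,-s}(\vartheta_p)$. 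Inserting \eqref{eqn:wigner} and using $\sin^2(\vartheta/2)=(1-t)/2$, $\cos^2(\vartheta/2)=(1+t)/2$ with $t=\cos\vartheta$, this integrand can be written as $(1-t)^s(1+t)^s$ times a polynomial in $t$ whose degree I would bound in terms of the two Jacobi polynomial factors and the residual half-angle exponents. For $j\in A_{\ell,s;N}$ we have $u=\ell+j\leq \ell+N-s-1$, and the hypothesis $Q>N-s$ is precisely what prevents the azimuthal shift $v=m+2rQ$ (subject to $|m+2rQ|<u$) from pushing the degree beyond $2N-1$; exactness then makes $I_N$ coincide with $\int_0^\pi d^\ell_{m,-s}(\vartheta)\,d^u_{v,-s}(\vartheta)\sin\vartheta\,\diff\vartheta$, which vanishes whenever $(u,v)\neq(\ell,m)$ by the orthonormality of the spin spherical harmonics (equivalently of the Wigner $D$-matrices). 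Since $r\in R_m^Q(\ell+j)$ excludes $r=0$, one always has $v\neq m$, and the secondary sum disappears. The equiangular scheme of Condition \ref{cond:3} is handled in parallel, using that its doubled node count $2N'$ grants the same polynomial exactness and the same symmetry around $\pi/2$ discussed in Section \ref{sec:parity}.

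The main obstacle is the degree bookkeeping in the previous paragraph: the half-angle exponents $(m+v\pm 2s)/2$ that appear when substituting \eqref{eqn:wigner} need not be non-negative integers, so one has to split into cases according to the signs of $m+v\pm 2s$---invoking, where necessary, the alternative Jacobi form $\Jac{\ell+s}{-m-s}{m-s}$ recorded just after \eqref{eqn:wigner}---and check case by case that, after pairing with the weight $(1-t)^s(1+t)^s$, the result is a genuine polynomial of the asserted degree. For the minimum-distance claim, once the secondary sum is removed the surviving primary aliases sit at locations $(u,v)=(\ell+j,m+2rQ)$ with $j\geq N-s$ and, in the smallest nonzero azimuthal shift, $|2rQ|=2N$ (attained at $|r|=1$ and the balanced choice $Q=N$). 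The Euclidean distance $\sqrt{j^2+(2rQ)^2}$ of Remark \ref{rem:loc} is then minimised at $j=N-s$, yielding $\sqrt{(N-s)^2+4N^2}=N\sqrt{5+\frac{s^2}{N^2}-\frac{2s}{N}}$ as claimed.
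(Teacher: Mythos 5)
Your reduction of the corollary to the vanishing of the secondary contribution is the right target, and your handling of the $r=0$ terms and of the minimum distance reproduces the paper's conclusions. However, the central step of your argument is false: the one-dimensional integral $\int_0^\pi d^{\ell}_{m,-s}(\vartheta)\, d^{u}_{v,-s}(\vartheta)\sin\vartheta\,\diff\vartheta$ does \emph{not} vanish for $v\neq m$. The orthogonality of the Wigner $D$-matrices in the azimuthal index is produced by the $\varphi$-integration of $e^{i(v-m)\varphi}$, which in the discrete setting is exactly the factor $H_Q(m,v)$ of Theorem \ref{th:sep}; it is precisely because the $\vartheta$-integral does \emph{not} enforce $v=m$ that aliases at $v=m+2rQ$, $r\neq 0$, exist at all. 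In the $\vartheta$-variable alone, $d^{\ell}_{m,-s}$ and $d^{u}_{v,-s}$ with $v\neq m$ reduce to Jacobi polynomials with \emph{different} parameter pairs integrated against a weight matching neither, so the orthogonality relation \eqref{eqn:orthonorm} is not applicable; the paper says this explicitly in the proof of Theorem \ref{th:main} (the orthogonality property is usable only when $r=0$), and Table \ref{tab:table2} exhibits nonzero values of the aliasing function at locations with $r\neq 0$ and small degree. If your step were correct, essentially all azimuthally shifted aliases of low degree would disappear, contradicting those computations.

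The mechanism the paper actually uses is combinatorial rather than analytic: under the hypothesis on $Q$, for $j\in A_{\ell,s;N}$ the admissible set $R_{m}^{Q}(\ell+j)=\{r\neq 0:\ |m+2rQ|<\ell+j\}$ becomes empty, so the secondary sum is vacuously zero. Gauss--Jacobi exactness and Jacobi orthogonality are then invoked only for the $r=0$ terms, where both Wigner functions carry the same lower indices $(m,-s)$, the integrand is genuinely a Jacobi weight times two Jacobi polynomials with the \emph{same} parameters, and one obtains $\delta_j^0\,\mathcal{I}_{m,s}(\ell)$. Your degree-bookkeeping paragraph is therefore aimed at the wrong obstacle: the issue is not whether the quadrature is exact for $v\neq m$, but that exactness would not produce a zero there anyway. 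To repair the proof you would need to argue emptiness of the secondary index set (and, as in the paper, restrict orthogonality to the matched-parameter case), not appeal to orthonormality of the spin spherical harmonics.
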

Observe that under the hypotheses assumed in Corollary \ref{cor:1}, the electric and the magnetic harmonic coefficients become
\begin{align*}
	\tilde{a}_{\ell,m;E}& =\sum_{j\in B_{s;N}} \frac{ \kappa_{\ell,\ell+j}}{2}\left[\sum_{r\in R_{m}^{Q}(\ell+j)}  I_{N}(\ell,m;\ell+j,m+2rQ) a_{\ell+j,m+2rQ;s}\right.
	\\ &\left. +\sum_{r\in R_{-m}^{Q}(\ell+j)}(-1)^{s-m+2rQ} I_{N}(\ell+j,-m+2rQ;\ell,-m)a_{\ell+j,m-2rQ;-s}\right] ; 
\end{align*}
\begin{align*}
	\tilde{a}_{\ell,m;B}& =\sum_{j\in B_{s;N}} \frac{ \kappa_{\ell,\ell+j}}{2}\left[\sum_{r\in R_{m}^{Q}(\ell+j)}  I_{N}(\ell,m;\ell+j,m+2rQ) a_{\ell+j,m+2rQ;s}\right.
	\\ &\left. -\sum_{r\in R_{-m}^{Q}(\ell+j)}(-1)^{s-m+2rQ} I_{N}(\ell+j,-m+2rQ;\ell,-m)a_{\ell+j,m-2rQ;-s}\right].
\end{align*} 
Replacing the integral \eqref{eq:alms} with the sum \eqref{eq:alias1} under the assumptions given in Condition \ref{cond:2}, we are now interested in understanding how the aliasing phenomenon impacts the estimation of the angular power spectrum $\Cls$, similar to the scalar case discussed in \cite{ln97}. Let us preliminarily define $\tilde{C}_{\ell,s}$ the aliased version of $\Cls$. Let us define 
\begin{align}
	& \Xi^{(\ell,m)}_{N,Q}(\ell^\prime)=\kappa_{\ell,\ell^\prime}^2\sum_{r\in R_m^{Q}(\ell^\prime)}I^2_N\left(\ell,m;\ell^\prime,m+2rQ\right),\label{eq:xi1}\\
	& \Xi^{(\ell,m)}_{0;N,Q}(\ell^\prime)=\kappa_{\ell,\ell^\prime}^2\sum_{r\in R_{0,m}^{Q}(\ell^\prime)}I^2_N\left(\ell,m;\ell^\prime,m+2rQ\right),\label{eq:xi2}
\end{align}
with the proper adaptation for $m=0$
\begin{align}\notag
	& \Xi^{(\ell,0)}_{0;N,Q}(\ell^\prime)=\kappa_{\ell,\ell^\prime}^2 \sum_{r \in R_{0}^{Q}  \left(\ell+j\right)} \frac{(2\ell+1)(2(\ell+j)+1)}{2}  I^2_{N}(\ell,0,\ell+j,2rQ)\\
	&\quad +\frac{(2\ell+1)(2(\ell+2(\ell^\prime-\ell))+1)}{4}  I^2_{N}(\ell,0,\ell+2(\ell^\prime-\ell),0).\label{eq:xi3}
\end{align}
For any $\ell \geq s$, the next theorem establishes the aliases of the power spectrum $\Cls$. The proof is available in Section \ref{sec:proof}.  
\begin{theorem}\label{th:power}
	Under the assumptions of Theorem \ref{th:main} and the additional Condition~\ref{cond:summability}, let $\Xi^{(\ell,0)}_{N,Q}(\ell^\prime)$, and $\Xi^{(\ell,m)}_{0;N,Q}(\ell^\prime)$ be given by Equations \eqref{eq:xi1} and \eqref{eq:xi2}, with the proper adaptation for $m=0$ specified in \eqref{eq:xi3}. Then, each element of the aliased angular power spectrum $\left\{\tilde{C}_{\ell,s}: \ell \geq s \right\}$ is given by 
	\begin{equation*}
		\tilde{C}_{\ell;s} =\frac{2\ell+1}{4\pi} \left(  \sum_{j\in A_{\ell,s;N}}   \Xi^{(\ell,0)}_{N,Q}(\ell+j)  C_{\ell+j;s}  + \sum_{j\in B_{s;N}}\Xi^{(\ell,0)}_{0;N,Q}(\ell+j) C_{\ell+j;s}   \right).
	\end{equation*}
\end{theorem}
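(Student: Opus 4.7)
The plan is to start from the natural definition of the aliased angular power spectrum, namely $\tilde{C}_{\ell;s} = \frac{1}{4\pi}\sum_{m=-\ell}^{\ell}\Ex[|\almstilde|^2]$, and reduce it to a weighted sum of the true $C_{\ell+j;s}$ by exploiting the second-order orthogonality of the harmonic coefficients established in Section \ref{sec:spinfields}, combined with the explicit aliasing decomposition provided by Theorem \ref{th:main}. The overall strategy is to substitute, expand, and collapse cross-products using $\Ex[a_{u,v;s}\overline{a_{u',v';s}}] = C_{u;s}\delta_u^{u'}\delta_v^{v'}$, after which the $\Xi$-type quantities appear naturally.

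First, I would fix $m\neq 0$ and substitute the expression
\[
\almstilde = \sum_{j\in A_{\ell,s;N}} \sum_{r \in R_{m}^{Q}(\ell+j)} \kappa_{\ell,\ell+j}\, I_N(\ell,m;\ell+j,m+2rQ)\, a_{\ell+j,m+2rQ;s} + \sum_{j\in B_{s;N}} \sum_{r \in R_{0,m}^{Q}(\ell+j)} \kappa_{\ell,\ell+j}\, I_N(\ell,m;\ell+j,m+2rQ)\, a_{\ell+j,m+2rQ;s}
\]
from Theorem \ref{th:main} into $\Ex[|\almstilde|^2]$. Since distinct pairs $(j,r)$ index distinct pairs $(u,v) = (\ell+j,m+2rQ)$, the orthogonality relation instantly kills every cross-term, leaving a diagonal sum whose squared coefficients match precisely the summands in \eqref{eq:xi1} and \eqref{eq:xi2}. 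For $m = 0$ the same substitution, now using the three-piece formula of Theorem \ref{th:main}, produces an extra isolated term corresponding to $(r=0,\, u=\ell+2j,\, v=0)$ that is not present for $m\neq 0$; this is exactly the third contribution built into the definition \eqref{eq:xi3} of $\Xi^{(\ell,0)}_{0;N,Q}$.

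Next, I would perform the $m$-sum and collect contributions according to whether the alias is secondary ($j\in A_{\ell,s;N}$) or primary ($j\in B_{s;N}$). The secondary contributions regroup into $\sum_{j\in A_{\ell,s;N}}\Xi^{(\ell,0)}_{N,Q}(\ell+j)\,C_{\ell+j;s}$, while the primary ones give $\sum_{j\in B_{s;N}}\Xi^{(\ell,0)}_{0;N,Q}(\ell+j)\,C_{\ell+j;s}$, in each case multiplied by $(2\ell+1)$ inherited from the $\kappa_{\ell,\ell+j}^2 = \tfrac14(2\ell+1)(2(\ell+j)+1)$ prefactor together with the normalization on $I_N$; the $\frac{1}{4\pi}$ comes directly from the definition of $\tilde{C}_{\ell;s}$. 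The extra $r=0$ diagonal term at $m=0$ is absorbed by the last summand of $\Xi^{(\ell,0)}_{0;N,Q}(\ell+j)$ via the parity observation of Remark~\ref{rem:comp}.

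The main obstacle, and the step requiring the most careful bookkeeping, is to show that the aggregate of the $m$-dependent inner sums reproduces the $m$-independent quantity $\Xi^{(\ell,0)}_{N,Q}(\ell+j)$ (and its $0;N,Q$ analogue) written in the statement. This is handled by re-indexing $m \mapsto m - 2rQ$ inside the double sum $\sum_m \sum_{r\in R_m^Q(\ell+j)}$: because the admissible range $R_m^Q(\ell+j)$ and the integral $I_N(\ell,m;\ell+j,m+2rQ)$ are linked through $m+2rQ$, the shift converts the weighted sum over $(m,r)$ with free $m$ into a sum over $r$ with $m$ collapsed to $0$, justifying the uniform notation $\Xi^{(\ell,0)}$. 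Once this symmetry is verified, the factor $(2\ell+1)$ emerges as the cardinality of the outer $m$-sum after the collapse, and the claimed identity follows by combining the secondary and primary blocks.
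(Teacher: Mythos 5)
Your first two steps coincide with the paper's own argument: the paper likewise expands $\Ex\left[\left\vert\almstilde\right\vert^2\right]$ by substituting the decomposition of Theorem \ref{th:main} and invokes the second-order orthogonality $\Ex\left[a_{u,v;s}\overline{a_{u^\prime,v^\prime;s}}\right]=C_{u;s}\delta_u^{u^\prime}\delta_v^{v^\prime}$ to retain only the diagonal terms, which regroup into the quantities \eqref{eq:xi1}--\eqref{eq:xi3}; your observation that the extra isolated term at $m=0$ is exactly the last summand of \eqref{eq:xi3} also matches the paper. Up to that point your proposal is correct and essentially identical to the published proof, which in fact stops there, expressing $\Ex\left[\left\vert\almstilde\right\vert^2\right]$ for a single $m$ in terms of $\Xi^{(\ell,m)}_{N,Q}$ and $\Xi^{(\ell,m)}_{0;N,Q}$.

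The gap is in your final step, where you sum over $m$ and claim that the re-indexing $m\mapsto m-2rQ$ collapses $\sum_{m=-\ell}^{\ell}\Xi^{(\ell,m)}_{N,Q}(\ell+j)$ to $(2\ell+1)\,\Xi^{(\ell,0)}_{N,Q}(\ell+j)$. That substitution does not do what you assert: shifting $m$ by $2rQ$ turns $I_N(\ell,m;\ell+j,m+2rQ)$ into $I_N(\ell,m-2rQ;\ell+j,m)$, a different object, and in neither form does the first azimuthal index become $0$. The quantity $I_N(\ell,m;\ell+j,m+2rQ)$ depends on $m$ irreducibly through the Wigner functions $d^{\ell}_{m,-s}(\vartheta_p)$ and $d^{\ell+j}_{m+2rQ,-s}(\vartheta_p)$ in \eqref{eq:IP}, and no shift or parity symmetry available in the paper makes $\sum_{r}I_N^2(\ell,m;\ell+j,m+2rQ)$ independent of $m$; consequently the factor $2\ell+1$ cannot be extracted as the ``cardinality of the outer $m$-sum after the collapse.'' To be transparent, the paper itself leaves this point implicit --- its proof ends with $\Xi^{(\ell,m)}$ while the statement displays $\Xi^{(\ell,0)}$ --- so the honest endpoint of your computation is $\tilde{C}_{\ell;s}=\frac{1}{4\pi}\sum_{m=-\ell}^{\ell}\bigl(\sum_{j\in A_{\ell,s;N}}\Xi^{(\ell,m)}_{N,Q}(\ell+j)C_{\ell+j;s}+\sum_{j\in B_{s;N}}\Xi^{(\ell,m)}_{0;N,Q}(\ell+j)C_{\ell+j;s}\bigr)$, and passing from this to the displayed formula requires an $m$-independence argument that neither you nor the paper supplies.
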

We consider now band-limited spherical spin random fields. As mentioned in the introduction, a spin $s$ random field is band-limited if there exists a multipole $L_0\in \mathbb{N}$, called bandwidth, such that for any $\ell > L_0$, $m=-\ell,\ldots,\ell$, $\alms=0$. In this case, the band-limited random field can be represented via the following
\begin{equation}\label{eq:bandlim}
	T_s (\vartheta,\varphi)=\sum_{\ell=0}^{L_0} \sum_{m=-\ell}^{\ell} \alms \Ylms (\vartheta,\varphi).
\end{equation}
The next result is going to show that a bandlimited spin spherical random field is alias-free. The proof is contained in Section \ref{sec:proof}.
\begin{theorem}\label{th:bandlim}
	Assume that $T_s(\vartheta,\varphi)$ is band-limited with bandwidth $L_0$ as in Equation \eqref{eq:bandlim}.	Under the assumptions of Theorem \ref{th:main}, with $N>L_0$ and $Q>L_0$, it holds that
	\begin{equation*}
		\almstilde= \alms,
	\end{equation*}
	for any $s\leq \ell \leq L_0$, and $m=-\ell,\ldots,\ell$.
\end{theorem}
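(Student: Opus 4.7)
The strategy is to show that under the hypotheses $N>L_0$ and $Q>L_0$, the aliasing function $\tau_s(\ell,m;u,v)$ reduces to $\delta_\ell^u \delta_m^v$ on the support of the band-limited field, so that Equation \eqref{eq:alias1} collapses to $\almstilde = \alms$. I would apply Theorem \ref{th:main} to decompose $\almstilde$ into contributions from primary aliases ($j\in B_{s;N}$), secondary aliases ($j \in A_{\ell,s;N}$), and the diagonal term $(j,r)=(0,0)$, and then eliminate the two alias families separately before verifying that the diagonal term equals one.

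The primary aliases vanish immediately. By definition $j \in B_{s;N}$ means $j \geq N-s$, so $\ell+j \geq \ell + N - s \geq N > L_0$. The band-limitation \eqref{eq:bandlim} then forces $a_{\ell+j,m+2rQ;s}=0$ for every admissible $r$, so the entire primary sum drops out. For the secondary aliases, I would split on whether $\ell+j\leq L_0$ or $\ell+j>L_0$. In the second regime the coefficients are zero by band-limitation. In the first regime, since $|m|\leq \ell\leq L_0<Q$ and $r\neq 0$ in $R_{m}^{Q}(\ell+j)$, one has
\begin{equation*}
|m+2rQ| \geq 2Q - |m| > 2L_0 - L_0 = L_0 \geq \ell+j,
\end{equation*}
contradicting the membership condition $|m+2rQ|<\ell+j$. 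Hence $R_{m}^{Q}(\ell+j)=\emptyset$ in this regime and the secondary sum is empty. The argument for $m=0$ is identical, using $R_{0}^{Q}(\ell+j)$ in place of $R_{m}^{Q}(\ell+j)$ and noting that the extra symmetric term $\kappa_{\ell,\ell+2j} I_N(\ell,0;\ell+2j,0) a_{\ell+2j,0;s}$ also has index exceeding $L_0$.

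It remains to verify the diagonal term. By Theorem \ref{th:sep} and the identity $H_Q(m,m)=2\pi$ established in Theorem \ref{th:main}, it suffices to show that $I_N(\ell,m;\ell,m)=2/(2\ell+1)$, so that $\tau_s(\ell,m;\ell,m)=1$. Using Equation \eqref{eqn:wigner}, the integrand $(d^{\ell}_{m,-s}(\vartheta))^2\sin \vartheta$ rewrites, under $t=\cos\vartheta$, as a polynomial of degree $2\ell$ in $t$ times the Gauss-Jacobi weight $(1-t)^{s}(1+t)^{s}$. Since the $N$-point Gauss-Jacobi rule is exact for polynomials of degree at most $2N-1$ against this weight, the condition $2\ell\leq 2L_0 < 2N$ ensures $I_N(\ell,m;\ell,m)$ coincides with its continuous counterpart, which by the orthonormality of Wigner $d$ functions equals $2/(2\ell+1)$. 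The same exactness argument extends to $I_N(\ell,m;u,v)$ for all $u\leq L_0$, so that orthonormality would also directly yield the off-diagonal vanishing; but we only need the diagonal value here. For the equiangular scheme of Condition \ref{cond:3}, one uses instead the Driscoll-Healy exactness property for trigonometric polynomials of degree below $2N^{\prime}$, which covers the same range.

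The main obstacle is the verification of quadrature exactness for the full product $d^{\ell}_{m,-s}d^{u}_{v,-s}\sin\vartheta$ across all sign configurations of $m\pm s$ and $v\pm s$, since the explicit Jacobi representation in Equation \eqref{eqn:wigner} requires the alternate form recalled in the paper when $m<|s|$. A clean treatment is to argue degree bounds uniformly from the fact that $d^{\ell}_{m,-s}$ is a trigonometric polynomial of degree $\ell$ in $\vartheta/2$, so the product $d^{\ell}_{m,-s}d^{u}_{v,-s}\sin\vartheta$ is a trigonometric polynomial of degree $\ell+u+1\leq 2L_0+1<2N$, which is integrated exactly by both schemes under the stated hypotheses. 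Once this degree bookkeeping is complete, the collapse $\almstilde=\alms$ is immediate.
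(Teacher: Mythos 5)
Your proposal is correct and follows essentially the same route as the paper's proof: out-of-band coefficients kill the primary aliases and the high secondary aliases, the hypothesis $Q>L_0$ empties the sets $R_m^Q(\ell+j)$ for $r\neq 0$, and Gauss quadrature exactness together with orthogonality handles what remains. You go slightly further than the paper in explicitly checking that the surviving diagonal term satisfies $\tau_s(\ell,m;\ell,m)=1$ via the orthonormality $\int_0^\pi (d^{\ell}_{m,-s}(\vartheta))^2\sin\vartheta\,\diff\vartheta = 2/(2\ell+1)$, a normalization the paper's proof leaves implicit when it concludes from $j=0$ that ``all the aliases are annihilated.''
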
 
\section{A practical example}\label{sec:example}
\begin{figure}
	\caption{Sampled $\vartheta$ for both the sampling schemes (GJ: Gauss-Jacobi, EA: equiangular)}
	\begin{center}
	\includegraphics
	[scale=0.5]{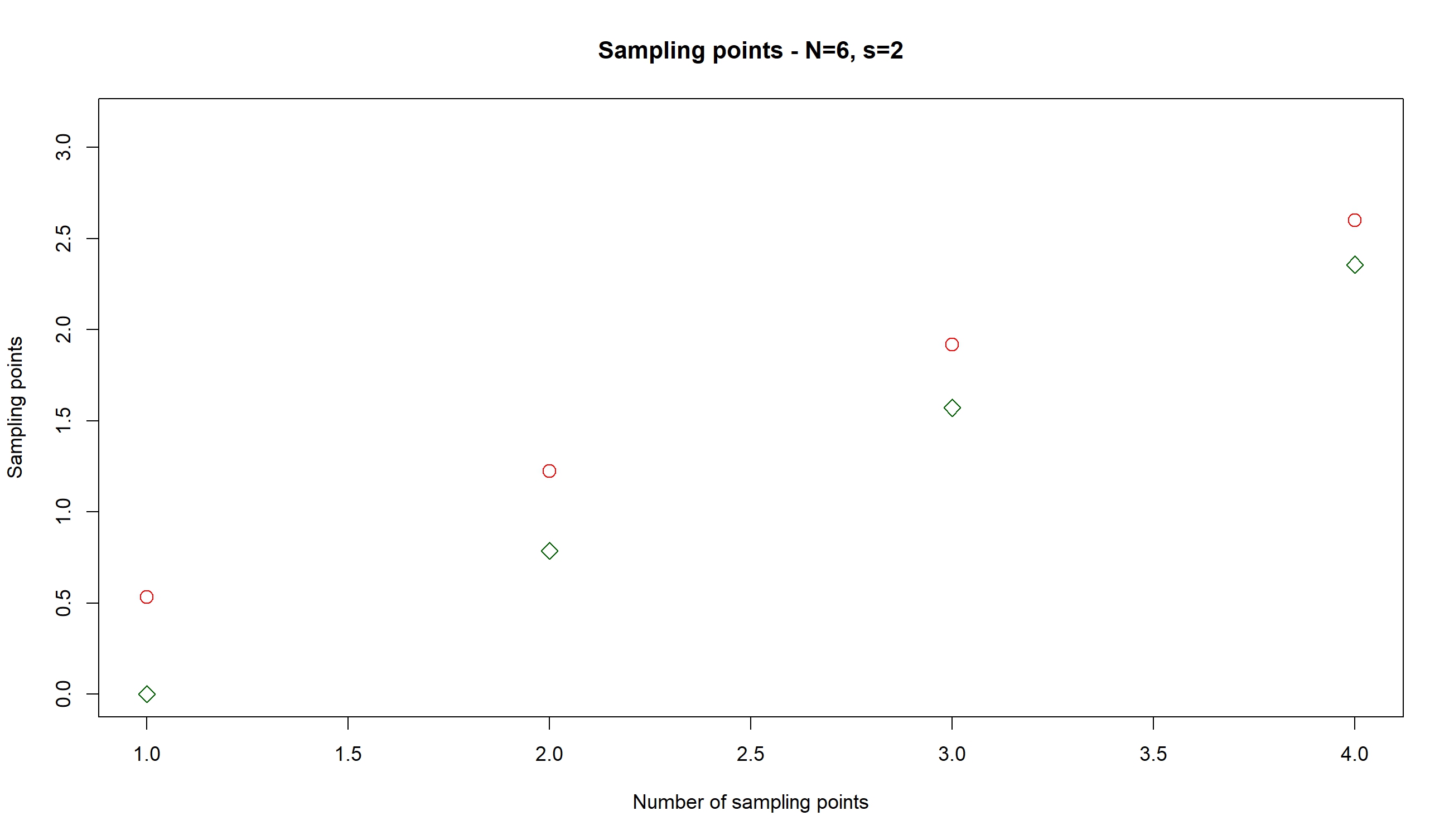}
	\label{fig:fig1}
	\end{center}
\end{figure}
In this section, we provide a simple practical example to help the reader to understand how to identify the aliases of a harmonic coefficient. This example aims to illustrate the process of alias identification for harmonic coefficients in a straightforward manner. Fixed, $s=2$, we calculate the some of the closer aliases of the harmonic coefficient $a_{2,0;2}$ and their intensities in following cases for $N=6$ and $Q=1,2$. Recall that for $N=6$ and $s=2$, we identify 4 cubature points for the colatitude $\vartheta$. For both the considered schemes, points and weights are collected in Table \ref{tab:table1}.
\begin{table}[ht]
	\centering
	\caption{Cubature nodes and weights for the Gauss--Jacobi (GJ) and equiangular (EA)
		sampling schemes, with $N=6$ and $s=2$.}
	\label{tab:table1}
	
	\begin{tabular}{c cc cc}
		\toprule
		& \multicolumn{2}{c}{\textbf{Gauss--Jacobi (GJ)}} 
		& \multicolumn{2}{c}{\textbf{Equiangular (EA)}} \\
		\cmidrule(lr){2-3} \cmidrule(lr){4-5}
		Index 
		& Nodes 
		& Weights 
		& Nodes 
		& Weights \\
		\midrule
		1 & 0.533 & 0.684 & 0.000 & 0.000 \\
		2 & 1.224 & 0.693 & 0.392 & 0.177 \\
		3 & 1.918 & 0.693 & 0.785 & 0.247 \\
		4 & 2.601 & 0.684 & 1.178 & 0.393 \\
		5 &       &       & 1.570 & 0.361 \\
		6 &       &       & 1.963 & 0.393 \\
		7 &       &       & 2.356 & 0.247 \\
		8 &       &       & 2.748 & 0.177 \\
		\bottomrule
	\end{tabular}
\end{table}
The locations over the interval $\left[0,\pi\right]$ of the sampling points $\vartheta_p$, $p=0,\ldots,3$, are portraited in Figure \ref{fig:fig1}.

Consider now the trapezoidal rule for the longitude $\varphi$. The identified aliases $a_{2+j,m+2rQ;2}$ will be thus reported in terms of the values of the indexes $j$ and $r$
\begin{itemize}
	\item \textbf{Case I:} $N=6$, $Q=1$. In this case we have $8$ and $16$ sampling points for the Gauss-Jacobi and equiangular methods respectively, as shown in Figures \ref{fig:subfig1} and \ref{fig:subfig3}. In this case, besides the primary aliases, here reported only for $j =2,3$, $r=\pm1,\pm 2$), also secondary aliases occur, located in $(j=2,r=\pm 1)$ and $(j=3,r=\pm 1)$ (Figure \ref{fig:fig2}, left panel). 
	\item \textbf{Case II:} $N=6$, $Q=2$. Here, we have $16$ and $32$ sampling points for the Gauss-Jacobi and equiangular methods respectively, as describes in Figures \ref{fig:subfig2} and \ref{fig:subfig4}. Here, only the primary locations survive, while the secondary ones are completely annihilated thanks to the choice of the parameter $Q$, as claimed by Corollary \ref{cor:1} (Figure \ref{fig:fig2}, right panel). 
\end{itemize}
\begin{figure}[htbp]
	\centering
	
	\begin{subfigure}[b]{0.45\textwidth}
		\centering
		\includegraphics[width=\textwidth]{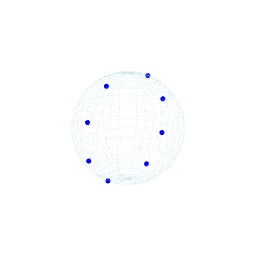}
		\caption{Gauss-Jacobi sampling: $Q=1$}
		\label{fig:subfig1}
	\end{subfigure}
	\hfill
	\begin{subfigure}[b]{0.45\textwidth}
		\centering
		\includegraphics[width=\textwidth]{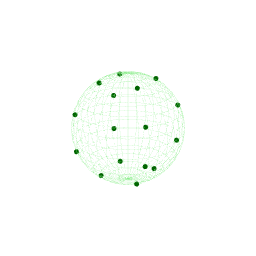}
		\caption{Gauss-Jacobi sampling: $Q=2$}
		\label{fig:subfig2}
	\end{subfigure}
	
	\vskip\baselineskip
	
	\begin{subfigure}[b]{0.45\textwidth}
		\centering
		\includegraphics[width=\textwidth]{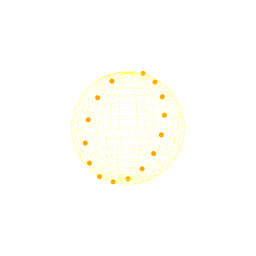}
		\caption{Equiangular sampling: $Q=1$}
		\label{fig:subfig3}
	\end{subfigure}
	\hfill
	\begin{subfigure}[b]{0.45\textwidth}
		\centering
		\includegraphics[width=\textwidth]{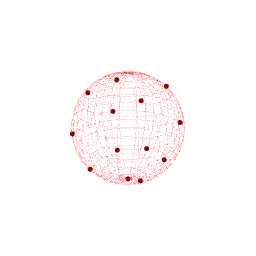}
		\caption{Equiangular sampling: $Q=2$}
		\label{fig:subfig4}
	\end{subfigure}
	
	\caption{Sampling points for Gauss-Jacobi (Figures \ref{fig:subfig1} and \ref{fig:subfig2}) and equiangular scheme (Figures \ref{fig:subfig3} and \ref{fig:subfig4}) for $Q=1$ and $Q=2$.}
	\label{fig:main}
\end{figure}
\begin{figure}
	\caption{Aliases locations ($N=6$, $s=2$, $Q=1,2$)}
	\includegraphics[width=\linewidth]{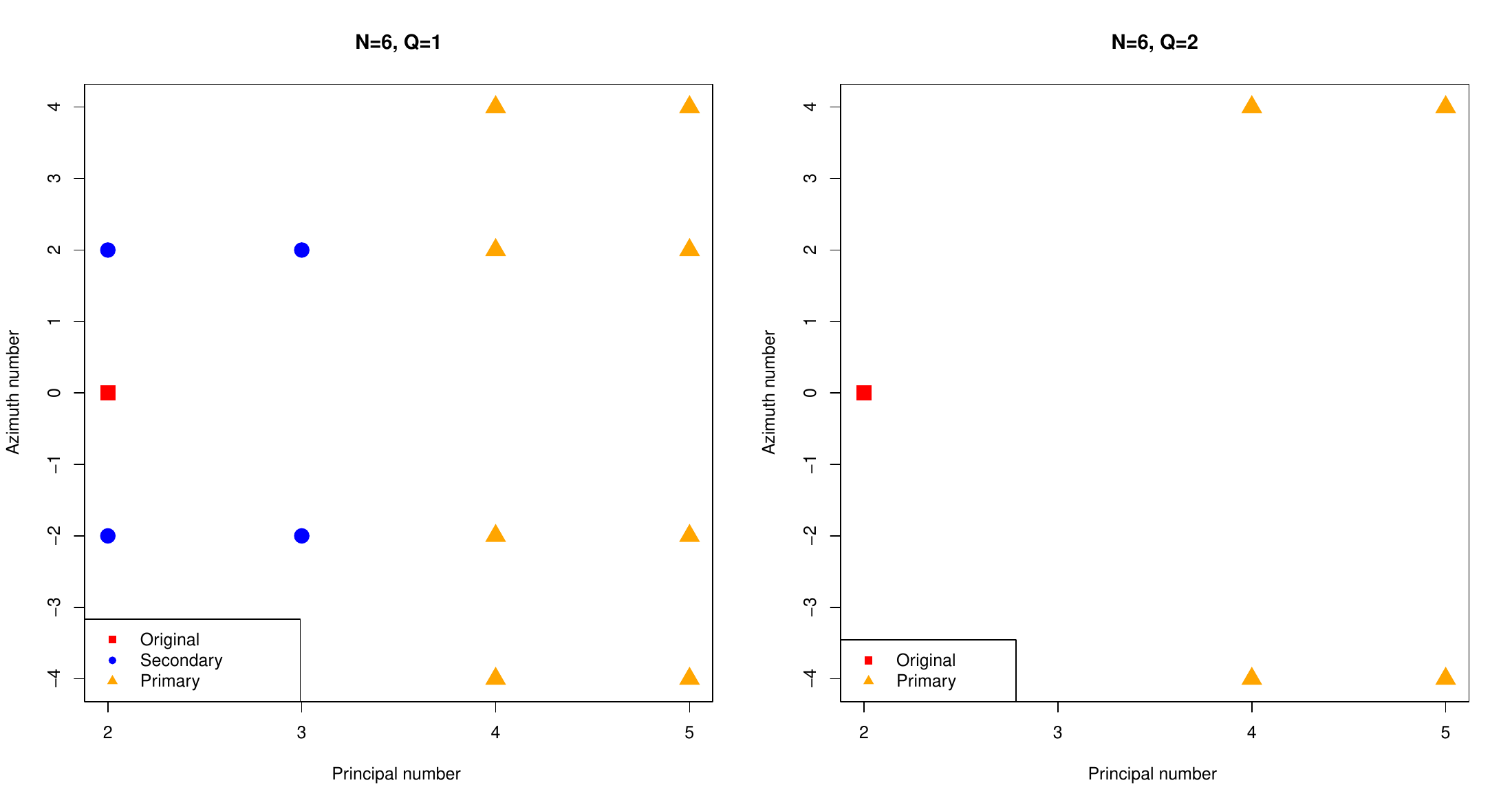}
	\label{fig:fig2}
\end{figure}
In Table \ref{tab:table2}, we collect and compare the values of the aliasing function  $\tau_2(2,0,2+j,2r)$ for both the sampling schemes. Even though the intensity is slightly higher almost everywhere for the Gauss-Jacobi scheme, it is important to remember that this effect is evaluated over approximately half the number of points used in the equiangular method. This disparity in the number of points could influence the perceived intensity differences, and the comparison should account for the differing densities of sampling points between the two methods. The Gauss-Jacobi scheme uses fewer but more strategically placed points, which can lead to more accurate integration in some cases, whereas the equiangular method distributes points evenly, providing a different sampling density that might affect the overall intensity evaluation.
\begin{table}[ht]
	\centering
	\caption{Values of the aliasing function 
		$\tau_2(2,0,2+j,2r)$ under Gauss--Legendre (GL) and equiangular (EA) sampling schemes,
		with $N=6$ and $s=2$.}
	\label{tab:table2}
	
	\begin{tabular}{lcc}
		\toprule
		Aliases $(j,r)$ 
		& \textbf{Gauss--Jacobi} 
		& \textbf{Equiangular} \\
		\midrule
		$(0,1)$   &  0.7640  &  0.6109 \\
		$(0,-1)$  &  0.7588  &  0.6112 \\
		$(1,1)$   &  0.6984  &  0.1672 \\
		$(1,-1)$  & -0.6992  & -0.1673 \\
		\midrule
		$(2,1)$   &  0.3189  &  0.2740 \\
		$(2,-1)$  &  0.3261  &  0.2728 \\
		$(2,2)$   &  0.9346  &  0.8263 \\
		$(2,-2)$  &  0.9292  &  0.8269 \\
		$(3,1)$   &  0.1981  &  0.1297 \\
		$(3,-1)$  & -0.1925  & -0.1286 \\
		$(3,2)$   &  0.3891  &  0.4492 \\
		$(3,-2)$  & -0.3968  & -0.4490 \\
		\bottomrule
	\end{tabular}
\end{table}

\section{Proofs} \label{sec:proof}
In this section, we present the proofs for the main and auxiliary results.
\begin{proof}[Proof of Theorem \ref{th:sep}]
	Under the assumptions given by Condition \ref{cond:sep}, straightforward calculations lead to  
	\begin{eqnarray*}
		\Amat & = &  \notag \sum_{p=0}^{N-1} \sum_{q=0}^{2Q-1} w_p^{\left(\vartheta\right)}  w_q^{\left(\varphi\right)} 
		Y_{u,v;s}\left(\vartheta_p,\varphi_q\right)\overline{\Ylms}\left(\vartheta_p,\varphi_q\right) \sin \vartheta_p\\
		& = & \notag \frac{\sqrt{\left(2\ell + 1\right)\left(2 u + 1 \right)}}{4\pi} \sum_{p=0}^{N-1} w_p^{\left(\vartheta\right)} 
		d^{\ell}_{m,-s}\left(\vartheta_p\right)d^{u}_{v,-s}\left(\vartheta_p\right) \sin \vartheta_p\\
		&&\times \notag \sum_{q=0}^{2Q-1}  w_q^{\left(\varphi\right)} e^{i\left(v-m\right)\varphi_q}
			\end{eqnarray*}
		Following Equations \eqref{eq:IP} and \eqref{eq:HQ}, it holds that
		\begin{eqnarray*}
		\Amat & = &  \frac{\sqrt{\left(2\ell + 1\right)\left(2 u + 1 \right)}}{4\pi}  I_{N}\left( \ell, m; u, v\right) H_Q\left(m,v\right), \notag\label{eq:asep1},
	\end{eqnarray*}
	as claimed.
\end{proof}
\begin{proof}[Proof of Theorem \ref{th:main}]
	We first focus on the trapezoidal rule for the longitude $\varphi$ (cf. the proof for \cite[Theorem 2]{dp19},). Consequently,
	\begin{align}\notag
		H_Q\left(m,v\right) &= \sum_{q=0}^{2Q-1} \frac{\pi}{Q} e^{i\left(v-m\right)q} \notag \\
		& = \begin{cases} 2\pi & \text{for }v=m+2rQ \\
			0 & \text{otherwise}
		\end{cases}, \notag 
	\end{align}
	where $r \in \mathbb{Z}$. We can now select only those values for $r$ such that $a_{u,m+2rQ;s}\neq 0$, for any $u \geq s$. This holds for $\vert m+2rQ \vert \leq u$, that is,  $r \in R_{m}^{Q}$, (cf. Equation \eqref{eq:Rm}).\\
	Thus, Equation \eqref{eq:alias1} becomes
	\begin{equation}\notag 
		\almstilde = \sum_{u \geq s} \sum_{r \in  R_{m}^{Q}\left(u\right) } \kappa_{\ell,u} I_{N}\left( \ell,m;u,m+2r Q \right) a_{u,m+2rQ;s}
	\end{equation}
	Observe that \eqref{eq:IP} becomes
	\begin{equation}\label{eq:IP2}
		I_{N}\left( \ell, m; \ell+j, m+2rQ\right) =  \sum_{p=0}^{N-1} w_p^{\left(\vartheta\right)} 
		d^{\ell}_{m,-s}\left(\vartheta_p\right)d^{\ell+j}_{m+2rQ,-s}\left(\vartheta_p\right) \sin \vartheta_p,\\
	\end{equation}
	where we have rescaled the index identifying the multipoles of the aliases by using $j=j_\ell=u-\ell$,  $j\in D_\ell =[-\ell,\infty)$.\\
	\textbf{CASE I: $j \in A_{\ell,s;N}$.}\\	 Preliminarily, we focus on the set $j \in A_{\ell,s;N}$, where the quadrature formula allows to rewrite the sum in Equation \eqref{eq:IP2} as an exact integral, so that  
	\begin{align*}
		I_{N}\left( \ell, m; \ell+j, m+2rQ\right) & = \int_{0}^{\pi} d^\ell_{m,-s} (\vartheta) d^{\ell+j}_{m+2rQ,-s} (\vartheta) \sin \vartheta \diff \vartheta\\
		& =\int_{0}^{\pi} h^{s}_{m}(\ell)h_{m+2rQ}^{s}(\ell+j) \\
		&\left( \sin \frac{\vartheta}{2}\right)^{-2(m+2rQ+s)}\left( \cos \frac{\vartheta}{2}\right)^{2(m+rQ-s)} \\ 
		&\quad \Jac{\ell+s}{-m-s}{m-s}\left(\cos\vartheta\right) \Jac{\ell+j+s}{-m-2rQ-s}{m+2rQ-s}\left(\cos\vartheta\right)\sin \vartheta \diff \vartheta\\
		& = \frac{h^{s}_{m}(\ell)h_{m+2rQ}^{s}(\ell+j)}{2^{-2s}} \\
		&\int_{-1}^{1}\left( 1-t\right)^{-(m+2rQ+s)}\left( 1+t\right)^{(m+rQ-s)} \\ 
		&\quad \Jac{\ell+s}{-m-s}{m-s}\left(t\right) \Jac{\ell+j+s}{-m-2rQ-s}{m+2rQ-s}\left(t\right)\diff t
	\end{align*}
	If we fix $r=0$, we obtain 
	\begin{align*}
			I_{N}\left( \ell, m; \ell+j, m\right) & = \frac{h^{s}_{m}(\ell)h_{m}^{s}(\ell+j)}{2^{-s}} \\
			&\int_{-1}^{1}\left( 1-t\right)^{-(m+s)}\left( 1+t\right)^{(m-s)} \\ 
			&\quad \Jac{\ell+s}{-m-s}{m-s}\left(t\right) \Jac{\ell+j+s}{-m-s}{m-s}\left(t\right)\diff t.
	\end{align*}
	In view of the orthogonality property of the Jacobi polynomials, given by Equation \eqref{eqn:orthonorm}), we have that
	\begin{align*}
		I_{N}\left( \ell, m; \ell+j, m\right) 
		& = \delta_{j}^{0}2^{2s} \left(h^{s}_{m}(\ell)\right)^2 \Lambda^{(-m-s,m-s)} _\ell\\
		&=\delta_{j}^{0} \mathcal{I}_{m,s}(\ell)
	\end{align*}
	where
	\begin{align}\label{eq:intint}
		\mathcal{I}_{m,s}(\ell)= \left(h^{s}_{m}(\ell)\right)^2 \Lambda^{(-m-s,m-s)} _\ell.
	\end{align}
It is important to note that the orthogonality property \eqref{eqn:orthonorm} is applicable only if $r=0$. If $r\neq0$, the Jacobi polynomials in the integrand function $I_{N}\left( \ell, m; \ell+j, m+2rQ\right)$ feature different parameters, case where \eqref{eqn:orthonorm} does not hold anymore.\\ 
\textbf{CASE II: $j \in B_{s;N}$.}\\	Take now $\ell \in B_{s;N}$, consider now only those coefficients with $m=0$. If additionally we fix $r=0$, Equation \eqref{eq:IP2} becomes
	\begin{align}\label{eq:cancel}
		I_{N}\left( \ell, 0; \ell+j,0\right) &=  \sum_{p=0}^{N-1} w_p^{\left(\vartheta\right)} 
		d^{\ell}_{0,-s}\left(\vartheta_p\right)d^{\ell+j}_{0,-s}\left(\vartheta_p\right) \sin \vartheta_p.
	\end{align} 
	It follows from Equation \eqref{eqn:wignparity} that
	\begin{align*}
		d^{\ell}_{0,-s}\left(\pi-\vartheta\right)d^{\ell+j}_{0,-s}\left(\pi-\vartheta\right) \sin\left(\pi-\vartheta\right)&
		= (-1)^{j}d^{\ell}_{0,-s}\left(\vartheta\right)d^{\ell+j}_{0,-s}\left(\vartheta\right) \sin\left(\vartheta\right).
	\end{align*}
	Now, observe that, since in both the sampling schemes it holds that $\vartheta_p=\vartheta_{N-1-p}$, for $p=,0,\ldots, \left[\frac{N}{2}\right]$ (cf. Section \ref{sec:parity}), the weights $w_p^{\theta}$ and $w_{N-1-p}^{\theta}$ are also equal for $p=,0,\ldots, \left[\frac{N}{2}\right]$. We remark that the in the equiangular scheme, the point identifying 0 is associated to a null weight.\\ 
	 As a consequence, if $j$ is odd, each pair of addends in the sum \eqref{eq:cancel} for $p$ and $N-1-p$ annhilate and, thus, all the aliases for $a_{\ell,0;s}$, with $r=0$ are characterized by multipoles with the same parity of $\ell$, as claimed. In this case, we have that
	 	\begin{align*}
	 	\tilde{a}_{\ell,0;s} & = \sum_{j\in A_{\ell,s;N}} \sum_{r \in R_{m}^{Q}  \left(\ell+j\right)}   \kappa_{\ell,\ell+j} I_{N}(\ell,m,\ell+j,m+2rQ) a_{\ell+j,m+2rQ;s}\\
	 	& + \sum_{j\in B_{s;N}} \sum_{r \in R_{0}^{Q}  \left(\ell+j\right)}    \kappa_{\ell,\ell+j}  I_{N}(\ell,0,\ell+j,2rQ) a_{\ell+j,2rQ;s}\\
	 	& + \sum_{j\in B_{s;N}} \kappa_{\ell,\ell+2j} I_{N}(\ell,0,\ell+2j,0) a_{\ell+2j,0;s},
	 \end{align*}
 where the last summand reflects the annihilation of all the aliases with odd $j$. 
\end{proof}
\begin{proof}[Proof of Corollary \ref{cor:1}]
Note that, if $Q \geq N-s$, for $j\in A_{\ell,s;N}$, only the aliases $a_{\ell+j,m+2rQ}$ with $r=0$ survive. Indeed, for indices $j\le N-s$, note that for any $r\neq0$
	\[
	|m+2rQ|\ge 2|r|Q-|m|\ge 2Q-\ell\ge 2Q-(\ell+j).
	\]
	Since $\ell+j\le \ell+(N-s)$ and $Q>N-s$, we have $|m+2rQ|>\ell+j$; hence the corresponding coefficient $a_{\ell+j,m+2rQ;s}$ vanishes.  
	Therefore, all terms with $r\ne0$ disappear and only $r=0$ contributes. After collapsing to $r=0$, the remaining sum \eqref{eq:IP} reduces to 
	\begin{equation*}
		I_{N}\left( \ell, m; \ell+j, m\right) =  \sum_{p=0}^{N-1} w_p^{\left(\vartheta\right)} 
		d^{\ell}_{m,-s}\left(\vartheta_p\right)d^{\ell+j}_{m,-s}\left(\vartheta_p\right) \sin \vartheta_p.
	\end{equation*}
	The product $d^{\ell}_{m,-s}\left(\vartheta_p\right)d^{\ell+j}_{m,-s}\left(\vartheta_p\right)$ is a polynomial in $\cos\vartheta_p$ of degree smaller or equal to $2\ell+j\le2(N-1)$ whenever $\ell+j\le N-1$.  
	Since the $N$-point Gauss–Jacobi rule is exact up to degree $2N-1$, the quadrature reproduces the integral exactly in this range.
	Thus, the quadrature formula allows to rewrite the last term of the previous equality as an exact integral, so that  
	\begin{align*}
		I_{N}\left( \ell, m; \ell+j, m\right) & = \int_{0}^{\pi} d^\ell_{m,-s} (\vartheta) d^{\ell+j}_{m,-s} (\vartheta) \sin \vartheta \diff \vartheta\\
		& = \delta_{j}^{0}2^{2s} \left(h^{s}_{m}(\ell)\right)^2 \Lambda^{(-m-s,m-s)} _\ell\\
		&=\delta_{j}^{0} \mathcal{I}_{m,s}(\ell),
	\end{align*}
	where $\mathcal{I}_{m,s}(\ell)$ is given by Equation \eqref{eq:intint}.
	In this case, all the secondary aliases have been annihilated. The only aliases surviving are in primariy locations, characterized by a distance from the aliased coefficient bounded by  
	\begin{align*}
	\text{dist}\left( \alms,a_{\ell+j,m+2rQ;s}\right) =& \sqrt{(j)^2+(m+2rQ-m)^2}\\
	 =&\sqrt{(N-s)^2+(2N)^2},
	\end{align*}
as claimed
\end{proof}
\begin{proof}[Proof of Theorem \ref{th:power}]
	First, note that under isotropy it holds that
	\begin{equation*}
				\Ex\left[\left\vert \alms \bar{a}_{\ell^\prime,m^\prime;s} \right \vert\right]=\Cls\delta_{\ell}^{\ell^\prime}\delta_{m}^{m^\prime}.
	\end{equation*}	
	Therefore all cross-terms with $(j,r)\ne(j',r')$ vanish, giving
	\[
	\mathbb{E}\!\left[\Big|\sum_{j,r}X_{j,r}a_{\ell+j,m+2rQ;s}\Big|^2\right]
	=\sum_{j,r}|X_{j,r}|^2C_{\ell+j;s},
	\]
	where the deterministic aliasing weights $X_{j,r}$ are those appearing in the
	definitions of $\Xi^{(\ell,m)}_{N,Q}$ and $\Xi^{(\ell,m)}_{0;N,Q}$ in 
	Equations~\eqref{eq:xi1}, \eqref{eq:xi2}, and \eqref{eq:xi3}.  
	They are explicit functions of the sampling parameters $(N,Q)$ and the harmonic indices $(\ell,m)$, and, as shown in those expressions,
	their dependence on $(j,r)$ involves only products and finite sums of Wigner $d$–functions and trigonometric factors.  
	Consequently, for fixed $(\ell,m,N,Q)$, the weights $X_{j,r}$ exhibit at most polynomial growth in $(j,r)$. Also, under the parametric assumption introduced in Equation~\eqref{eq:cls}, 
	the sequence $\{a_{\ell,m;s}\}$ is square-summable, ensuring that
	\[
	\sum_{j,r} |X_{j,r}|^2\,\mathbb{E}\!\left[|a_{\ell+j,m+2rQ;s}|^2\right]
	\le G \sum_{j,r} |X_{j,r}|^2 (1+\ell+j)^{-\alpha} < \infty.
	\]
	The series on the right-hand side remains absolutely convergent under the decay rate imposed by~\eqref{eq:cls}. Then	the right-hand side converges absolutely; hence both Fubini’s and Tonelli’s theorems apply, 
	and the expectation may be interchanged with the summations.\\
	This condition validates the following manipulations for the expectation of $|\tilde a_{\ell,m;s}|^2$, expanded as
	\[
	\begin{aligned}
		\mathbb{E}\!\left[\big|\tilde a_{\ell,m;s}\big|^2\right]
		&= \left(
		\sum_{j\in A_{\ell,s;N}} \Xi^{(\ell,m)}_{N,Q}(\ell+j)\,C_{\ell+j;s}
		+ \sum_{j\in B_{s;N}} \Xi^{(\ell,m)}_{0;N,Q}(\ell+j)\,C_{\ell+j;s}
		\right),
	\end{aligned}
	\]
	and, similarly, for $m=0$,
	\[
	\begin{aligned}
		\mathbb{E}\!\left[\big|\tilde a_{\ell,0;s}\big|^2\right]
		&= \left(
		\sum_{j\in A_{\ell,s;N}} \Xi^{(\ell,0)}_{N,Q}(\ell+j)\,C_{\ell+j;s}
		+ \sum_{j\in B_{s;N}} \Xi^{(\ell,0)}_{0;N,Q}(\ell+j)\,C_{\ell+j;s}
		\right),
	\end{aligned}
	\]
	where each term on the right-hand side is finite by virtue of~\eqref{eq:cls}.
	Hence, the interchange of expectation and infinite summation is fully justified, 
	and the resulting expression for the aliased power spectrum 
	follows rigorously from the moment bound on the coefficients.\\
	Thus, it holds that
	\begin{align*}
		\Ex\left[\left\vert \almstilde \right \vert^2\right] & =   \left(  \sum_{j\in A_{\ell,s;N}}   \frac{(2\ell+1)(2(\ell+j)+1)}{2}  C_{\ell+j;s}  \sum_{r \in R_{m}^{Q} \left(\ell+j\right)}   I^2_{N}(\ell,m,\ell+j,m+2rQ) \right.  \\
		& \left. + \sum_{j\in B_{s;N}} C_{\ell+j;s} \sum_{r \in R_{0,m}^{Q}  \left(\ell+j\right)}  \frac{(2\ell+1)(2(\ell+j)+1)}{2}   I^2_{N}(\ell,m,\ell+j,m+2rQ)  \right)\\ 
		& =   \left(  \sum_{j\in A_{\ell,s;N}}   \Xi^{(\ell,m)}_{N,Q}(\ell+j)  C_{\ell+j;s}  + \sum_{j\in B_{s;N}}\Xi^{(\ell,m)}_{0;N,Q}(\ell+j) C_{\ell+j;s}   \right),
	\end{align*}
where in the last equality we used Equations \eqref{eq:xi1} and \eqref{eq:xi2}.
For $m=0$ a similar reasoning leads to  
	\begin{align*}
		\Ex\left[\left\vert \tilde{a}_{\ell,0;s} \right \vert^2\right] & =    \left(  \sum_{j\in A_{\ell,s;N}}   C_{\ell+j;s}  \sum_{r \in R_{0}^{Q} \left(\ell+j\right)} \frac{(2\ell+1)(2(\ell+j)+1)}{2}   I^2_{N}(\ell,0,\ell+j,2rQ) \right.  \\
		&  + \sum_{j\in B_{s;N}} C_{\ell+j;s} \left(\sum_{r \in R_{0}^{Q}  \left(\ell+j\right)} \frac{(2\ell+1)(2(\ell+j)+1)}{2}  I^2_{N}(\ell,0,\ell+j,2rQ)\right. \\
		&\left.\left.+\frac{(2\ell+1)(2(\ell+2j)+1)}{2}  I^2_{N}(\ell,0,\ell+j,0)  \right)\right) \\
		& =   \left(  \sum_{j\in A_{\ell,s;N}}   \Xi^{(\ell,0)}_{N,Q}(\ell+j)  C_{\ell+j;s}  + \sum_{j\in B_{s;N}}\Xi^{(\ell,0)}_{0;N,Q}(\ell+j) C_{\ell+j;s}   \right),
	\end{align*}
	where in the last equality we used \eqref{eq:xi3}, taking care of the annilihation of more aliases happening whereas both $m$ and $r$ are equal to 0.\\
	Now we show how parity leads to exact cancellations for \(m=0\) in Wigner \(d\)-function integrals.  More in detail, we illustrate, through a concrete example, how the interplay of parity in Wigner \(d\)-functions leads to exact vanishing of certain colatitude integrals and their discrete quadrature counterparts. Take arbitrary integers \(\ell=\ell^\ast\) and \(j=j^\ast\) such that \(2\ell^\ast+j^\ast\) is odd; we show that the corresponding colatitude integral
	\[
	\int_{0}^{\pi} d^{\ell^\ast}_{0,-s}(\vartheta) d^{\ell^\ast+j^\ast}_{0,-s}(\vartheta)\sin\vartheta \diff \vartheta
	\]
	vanishes for this choice of indices, and that the quadrature sum \(I_{N^\ast}(\ell^\ast,0;\ell^\ast+j^\ast,0)\) coincides with this integral
	whenever the Gauss rule is exact on the polynomial degree in question, that is, for a proper choice of $N^\ast$.\\
	First, we establish an explicit relation to associated Legendre functions and parity.
	Up to an \(\ell\)- and \(s\)-dependent normalization factor, the Wigner \(d\)-functions with indices \((0,-s)\) reduce to associated Legendre functions in the variable \(x=\cos\vartheta\); in particular 
	\[
	P_{\ell,s}(\cos\vartheta)
	\;=\;
	(-1)^{s} \sqrt{\frac{(\ell+s)!}{(\ell-s)!}}
	d^{\ell}_{0,-s}(\vartheta),
	\]
	where \(P_{\ell,s}(x)\) denotes the associated Legendre function (cf. \cite{marpec11}). As stated by \cite{sze75}, the associated Legendre functions satisfy the parity identity 
	\[
	P_{\ell,s}(-x) = (-1)^{\ell+s}\,P_{\ell,s}(x).
	\]
	Consequently, for our two factors we obtain
	\[
	P_{\ell^\ast,s}(-x) P_{\ell^\ast+j^\ast,s}(-x) 
	= (-1)^{2\ell^\ast+j^\ast + 2s} P_{\ell^\ast}^{\,s}(x) P_{\ell^\ast+j^\ast,s}(x) 
	= -\,P_{\ell^\ast,s}(x) P_{\ell^\ast+j^\ast,s}(x),
	\]
	because \(2s\) is even for any integer \(s\) and \(2\ell^\ast+j^\ast\) is odd. Thus the product \(P_{\ell^\ast,s}(x)P_{\ell^\ast+j^\ast,s}(x)\) is an odd function of \(x\).\\
	Defining \(k_{\ell,s} = (-1)^{\ell} \sqrt{\frac{(\ell-s)!}{(\ell+s)!}}\) and passing now to the variable \(x=\cos\vartheta\), we write
	\[
	\int_{0}^{\pi} d^{\ell^\ast}_{0,-s}(\vartheta)\,d^{\ell^\ast+j^\ast}_{0,-s}(\vartheta)\sin\vartheta\,\diff\vartheta
	= k_{\ell^\ast,s}k_{\ell^\ast+j^\ast,s}\int_{0}^{\pi} P_{\ell^\ast,s}(\cos\vartheta)\,P_{\ell^\ast+j^\ast,s}(\cos\vartheta)\sin\vartheta\,\diff\vartheta.
	\]
	With \(x=\cos\vartheta\) this becomes
	\[
	k_{\ell^\ast,s}k_{\ell^\ast+j^\ast,s}\int_{-1}^{1} P_{\ell^\ast,s}(x)\,P_{\ell^\ast+j^\ast,s}(x)\,\diff x.
	\]
	Since the integrand \(P_{\ell^\ast,s}(x)P_{\ell^\ast+j^\ast,s}(x)\) is odd, the integral over the symmetric interval \([-1,1]\) vanishes.
	Note that the Gauss-Jacobi \(N^\ast\)-point quadrature used for colatitude is exact for polynomials in \(x=\cos\vartheta\)
	up to degree \(2N^\ast-1\). The product \(P_{\ell^\ast,s}(x)P_{\ell^\ast+j^\ast,s}(x)\) is thus a polynomial in \(x\) of degree at most 
	\(2\ell^\ast + j^\ast\). Hence for any \(N^\ast\) with \(2N^\ast-1 \ge 2\ell^\ast+j^\ast\)
	the discrete colatitude sum \(I_{N^\ast}(\ell^\ast,0;\ell^\ast+j^\ast,0)\)
	reproduces the continuous integral exactly and thus we obtain
	\[
	I_{N^\ast}(\ell^\ast,0;\ell^\ast+j^\ast,0) = 0 \qquad\text{for }2N^\ast-1 \ge 2\ell^\ast+j^\ast.
	\]
	Finally, to illustrate explicitly how Equations \eqref{eq:xi1}, \eqref{eq:xi2}, and \eqref{eq:xi3} operate, we examine the simplest nontrivial instance: 
	\[
	j=1, \quad r=0.
	\]
	Then the relevant combination of squared integrals reads
	\[
	\kappa_{\ell,\ell+1} \left( \left \vert I_N\left(\ell,m;\ell+1,m\right)\right \vert^2 + \left\vert I_N\left(\ell,m;\ell+1,m+2Q\right)\right \vert^2 \right).
	\]
Using	the following symmetry of the Wigner \(d\)-functions under \(\vartheta \mapsto \pi - \vartheta\): 
	\[
	d^{\ell}_{m,n}(\pi - \vartheta) = (-1)^{\ell+n} d^{\ell}_{-m,n}(\vartheta),
	\]  
we can relate the integral with \(m+2Q\) to the one with \(m\):
	\[
	\begin{split}
	I_N(\ell,m;\ell+1,m+2Q) & = (-1)^{2\ell+1} I_N(\ell,-m;\ell+1,-m-2Q) \\
	&= - I_N(\ell,-m;\ell+1,-m-2Q),
	\end{split}
	\]
	so the two terms in the sum are connected by a simple sign and index flip.
	Also, from the parity of the product of two \(d\)-functions, it holds that
\[
d^\ell_{m,s}(\vartheta)\, d^{\ell+j}_{m',s}(\vartheta)
= (-1)^{j}\, d^\ell_{m,s}(\vartheta)\, d^{\ell+j}_{m',s}(\vartheta),
\quad \text{whenever } 2\ell + j \text{ is odd}.
\]
In our case \(j=1\), so the integrals corresponding to \((m,m)\) and \((m,m+2Q)\) are related by a factor of \((-1)^1 = -1\) for the symmetric terms that would otherwise appear twice. This eliminates redundancy in the sum.
Also, we can rewrite the combination of terms differing by shifts in \(m\) or \(r\) into a single multiplicative factor,
	\[
	|I_N(\ell,m;\ell+1,m)|^2 + |I_N(\ell,-m;\ell+1,-m-2Q)|^2 = 2 |I_N(\ell,m;\ell+1,m)|^2,
	\]  
	because the two integrals have the same magnitude and are related by a symmetry transformation.  
Finally, collecting the prefactors \(\kappa_{\ell,\ell+1}\) and the multiplicity factor from the symmetric combination above, we obtain the simplified form
\[
\begin{split}
Xi^{(\ell,m)}_{N^\ast,Q}(\ell+1) &= \kappa_{\ell,\ell+1} \left( \left \vert I_{N^\ast}(\ell,m;\ell+1,m)\right \vert^2 + \left \vert I_{N^\ast}(\ell,m;\ell+1,m+2Q)\right \vert^2 \right)\\
&= 2 \,\kappa_{\ell,\ell+1} \left \vert I_{N^\ast}(\ell,m;\ell+1,m)\right \vert^2.
\end{split}
\]  
Thus, for this simple instance, we explicitly see how identities \eqref{eq:xi1}, \eqref{eq:xi2}, and \eqref{eq:xi3} merge symmetric terms and eliminate redundancies, producing the single multiplicative factor \(\Xi^{(\ell,m)}_{N^\ast,Q}(\ell+1)\) in the proper notation.
 \\
	This procedure extends straightforwardly to larger \(j\) and \(r\) by iteratively applying the same symmetries to group all equivalent terms.
\end{proof}

\begin{proof}[Proof of Theorem \ref{th:bandlim}]
	Following the lines of the proof of \cite[Theorem 6]{dp19}, this proof combines the one for Theorem \ref{th:main} and the one for Corollary \ref{cor:1}. Preliminarily, we observe that for any $\ell >L_0 $ and any $m^\prime =-\ell,\ldots,\ell$, it holds that
	\begin{equation*}
		a_{\ell^\prime,m^\prime;s}=0,
	\end{equation*} 
Also, also the harmonic coefficients for $s\leq\ell \leq L_0 $ and $m > \left \vert \ell,\right \vert$ are null. 
Indeed, recall the band-limit assumption: there exists \(L_0\in \mathbb{N}\) such that
\[
a_{\ell,m;s}=0 \qquad\text{whenever }\; \ell > L_0 \quad\text{or}\quad |m|>\ell'.
\]
Consider an aliased term \(a_{\ell+j,\;m+2rQ;\,s}\) arising in the expansion. Two independent index conditions force this coefficient to vanish.

\begin{enumerate}
	\item \emph{Band-limit in \(\ell\).} If \(\ell+j>L_0\) then \(a_{\ell+j,m+2rQ;s}=0\) by the first part of the band-limit assumption.
	
	\item \emph{Azimuth index bound.} Even when \(\ell+j\le L_0\), the coefficient is zero whenever the magnetic index violates the usual bound, that is, if
	\(\left \vert m+2rQ \right \vert |>\ell+j \), then the azimuth index is larger than the multipole and necessarily we have that \( a_{\ell+j,m+2rQ;s}=0\)
\end{enumerate}
Thus a sufficient and convenient condition for discarding all terms with \(r\neq 0\) is
	\[
	|m+2rQ|>\ell+j\quad\text{for every } r\neq 0,
	\]
	since then every aliased coefficient \(a_{\ell+j,m+2rQ;s}\) vanishes by the second rule above (or by the first rule if \(\ell+j>L_0\)). Equivalently, for fixed \((\ell,j,m)\) one may check the stronger condition
	\[
	\min\{\ell+j,L_0\} < |m+2rQ|
	\]
	which implies \(a_{\ell+j,m+2rQ;s}=0\) regardless of whether \(\ell+j\) exceeds the band-limit. In particular, if the sampling parameter \(Q\) is chosen so that \(|m+2rQ|>\ell+j\) for every \(r\neq0\) in the alias set, then all \(r\neq0\) contributions may be dropped immediately.
Thus, for any $r\neq 0$,  
	\begin{equation*}
		a_{\ell+j,m+2rQ;s}=0.
	\end{equation*} 
	The computation of the aliases of \(\tilde a_{\ell,m;s}\) is reduced to the discrete  sum
\[
\sum_{p=0}^{N-1} w_p^{(\vartheta)} \,
d^{\ell}_{m,-s}(\vartheta_p)\,d^{\ell+j}_{m,-s}(\vartheta_p)\,\sin\vartheta_p,
\]
and, by the quadrature rule, exact for polynomials in \(x=\cos\vartheta\) up to degree \(2N-1\), this sum reconstructs the continuous integral
\[ 
\int_0^\pi d^{\ell}_{m,-s}(\vartheta)\,d^{\ell+j}_{m,-s}(\vartheta)\,\sin\vartheta\,\diff \vartheta.
\]
This integral, and hence the related discrete sum, vanishes for every \(j\neq0\), so that only \(j=0\) contributes. Indeed, from the band-limit hypothesis, any aliased coefficient \(a_{\ell+j,m+2rQ;s}\) is identically zero as soon as \(\ell+j>L_0\). Thus all terms with \(\ell+j>L_0\) drop out immediately. It remains to consider indices with \(\ell+j\le L_0\).\\
We now use the standard representation of Wigner \(d\)-functions in terms of Jacobi polynomials \eqref{eqn:wigner} (cf.\ e.g.\ \cite{vmk}):, we write
\begin{equation*}
\begin{split}
	&d_{m,-s}^{\ell}\left(\vartheta\right)\\&\qquad=h_{s}^{m}(\ell)\left( \sin \frac{\vartheta}{2}\right)^{m+s}\left( \cos \frac{\vartheta}{2}\right)^{m-s} \Jac{\ell-m}{m+s}{m-s}\left(\cos\vartheta\right),
\end{split}
\end{equation*}
The same representation applies to \(d^{\ell+j}_{m,-s}\) and, therefore the integrand is, up to a positive factor \((\sin\vartheta)^{2\max(\left \vert m\right \vert,\left \vert s\right \vert)+1}\), the inner product of two Jacobi polynomials of degrees \(\ell\) and \(\ell+j\):
\[
\begin{split}
&\int_0^\pi d^{\ell}_{m,-s}(\vartheta)\,d^{\ell+j}_{m,-s}(\vartheta)\,\sin\vartheta\,d\vartheta
\\&\qquad C_{\ell,m,s}C_{\ell+j,m,s}
\int_{-1}^1 P^{(m+s,m-s)}_{n}(x)\,P^{(m+s,m-s)}_{n'}(x)\,w^{(m+s,m-s)}(x)\,\diff x.\end{split}
\]
By the orthogonality of Jacobi polynomials the last integral vanishes unless \(\ell=\ell+j\) and thus it forces \(j=0\). Thus, for any pair \((\ell,m,s)\) with \(\ell+j\le L_0\), the continuous integral, and hence its quadrature reconstruction, is zero when \(j\neq0\).
Combining the two observations (band-limit cut-off \(\ell+j>L_0\) and Jacobi orthogonality for \(\ell+j\le L_0\)) yields the desired conclusion: every aliased contribution with \(j\ne0\) vanishes, and the only surviving index in the alias-sum is \(j=0\). Finally, since quadrature is exact for the polynomial degree appearing in the integrand for a proper choice of \(N\), the discrete sum reproduces the integral exactly and thus collapses to the single \(j=0\) term.
\end{proof}
\section*{Acknowledgement and funding}
\noindent The author wishes to thank (in rigorous alphabetic order) Francesca Pistolato, Maurizia Rossi and Michele Stecconi and Nicola Turchi for useful hints and insightful suggestions.\\
This research has been partially funded by 
Progetti di Ateneo Sapienza RG1221815C353275 (2022), and RM12117A6212F538 (2021), and the PRIN project 2022 \emph{Grafia: Geometry of Random Fields and Applications}, 202284Z9E4, CUP E53D23005530006.

\end{document}